\definecolor{c1}{RGB}{10,100,155}
\definecolor{c2}{RGB}{50,135,90}
\theoremstyle{plain}
\newtheorem{prop}{Proposition}[section]
\newtheorem{theo}[prop]{Theorem}
\newtheorem{constr}[prop]{Mirror Construction}
\theoremstyle{definition}
\newtheorem{dfn}[prop]{Definition}
\newtheorem{rem}[prop]{Remark}
\newtheorem{ex}[prop]{Example}
\newtheoremstyle{noparens}
{}{}
{\itshape}{}
{\bfseries}{{\bf .}}
{ }
{\thmname{#1}\thmnumber{ #2}\mdseries\thmnote{ #3}}
\theoremstyle{noparens}
\newtheorem*{introprop}{Proposition}
\newtheorem{theopar}[prop]{Theorem}
\newtheorem{expar}[prop]{Example}
\DeclareMathOperator{\tope}{\Delta}
\DeclareMathOperator{\vol}{\mathrm{vol}}
\DeclareMathOperator{\Vol}{\mathrm{Vol}}
\DeclareMathOperator{\conv}{\mathrm{conv}}
\DeclareMathOperator{\Hom}{{\mathrm Hom}}
\newcommand*{\defeq}{\coloneqq}
\def\C{{\mathbb C}}
\def\P{{\mathbb P}}
\def\Z{{\mathbb Z}}
\def\N{{\mathbb N}}
\def\R{{\mathbb R}}
\def\Q{{\mathbb Q}}
\def\T{{\mathbb T}}
\def\vol{{\rm vol}}
\def\ggT{{\rm gcd}}
\def\str{{\rm str}}
\def\w{\overline{w}}
\newcommand{\uproman}[1]{\uppercase\expandafter{\romannumeral#1}}
\begin{document}

\title[Calabi-Yau hypersurfaces in weighted projective spaces]{Mirror symmetry 
for quasi-smooth Calabi-Yau hypersurfaces in weighted projective spaces}

\author[Victor Batyrev]{Victor Batyrev}
\address{Mathematisches Institut, Universit\"at T\"ubingen,
Auf der Morgenstelle 10, 72076 T\"ubingen, Germany}
\email{victor.batyrev@uni-tuebingen.de}

\author[Karin Schaller]{Karin Schaller}
\address{Mathematisches Institut, Freie Universit\"at Berlin,
Arnimallee 3, 14195 Berlin, Germany}
\email{karin.schaller@fu-berlin.de}

\begin{abstract}
We consider a $d$-dimensional  well-formed weighted projective 
space $\P(\w)$ as a toric variety associated with a fan $\Sigma(\w)$ in 
$N_{\w} \otimes \R$ whose $1$-dimensional cones are spanned  by primitive 
vectors $v_0, v_1, \ldots, v_d \in N_{\w}$ generating a lattice 
$N_{\w}$ and satisfying the linear relation $\sum_i w_i v_i =0$. 
For any fixed dimension $d$, there exist only finitely 
many weight vectors $\w = (w_0, \ldots, w_d)$ such that $ \P(\w)$ 
contains a quasi-smooth Calabi-Yau hypersurface $X_w$ 
defined by a transverse weighted homogeneous polynomial
$W$ of degree $w = \sum_{i=0}^d w_i$. 
Using a formula of Vafa for the orbifold Euler number  
$\chi_{\rm orb}(X_w)$, we show 
that for any quasi-smooth Calabi-Yau hypersurface $X_w$ 
the number $(-1)^{d-1}\chi_{\rm orb}(X_w)$ equals 
the stringy Euler number  $\chi_{\rm str}(X_{\w}^*)$ of   
Calabi-Yau compactifications $X_{\w}^*$ of
affine toric hypersurfaces $Z_{\w}$ defined by 
non-degenerate Laurent polynomials $f_{\w} \in \C[N_{\w}]$ 
with Newton polytope $\conv(\{v_0, \ldots, v_d\})$. 
In the moduli space of Laurent polynomials $f_{\w}$
there always exists a special point $f_{\w}^0$ defining  a mirror $X_{\w}^*$ 
with a $\Z/w\Z$-symmetry group such that $X_{\w}^*$ is birational to a 
quotient of a Fermat hypersurface via a Shioda map.
\end{abstract}

\maketitle
\thispagestyle{empty}

\section{Introduction}

Many topologically different
examples of smooth Calabi-Yau threefolds and many evidences
for mirror symmetry were obtained from quasi-smooth
Calabi-Yau hypersurfaces $X_w$  in $4$-dimensional
weighted projective spaces
$\P(w_0,w_1,w_2,w_3,w_4)$ defined by weighted homogeneous polynomials 
$W \in \C[z_0, \ldots, z_4]$ of degree $w = \sum_{i=0}^4 w_i$ 
such that the differential 
$dW$ vanishes exactly in the origin $0 \in \C^5$ 
\cite{CLS90,GP90,CdlOGP91,CdlOK95}. 
The Hodge numbers of two $d$-dimensional smooth Calabi-Yau varieties $V$ and $V^*$ that 
are mirror symmetric to each other must satisfy the equalities  
\[ h^{p,q}(V) = h^{d-p,q}(V^*)  \]
for all $0 \leq p,q \leq d$ \cite{Wit92}. In particular, the Euler number 
$\chi = \sum_{p,q} (-1)^{p+q} h^{p,q}$ must satisfy the equality
\[ \chi(V) =  (-1)^d  \chi(V^*).\]
Unfortunately, quasi-smooth Calabi-Yau hypersurfaces 
 $X_w \subset \P(w_0,w_1,w_2,w_3,w_4)$ defined
by weighted homo\-ge\-neous polynomials $W$ are usually singular. 
The first ma\-the\-ma\-tical verifications of the above mentioned equalities for
Hodge and Euler numbers were based on the existence of 
crepant desingularizations $\rho: Y \to X_w$
that allow to replace $X_w$ by  
smooth Calabi-Yau threefolds  $Y$ \cite{Roa90}.
Note that crepant desingularizations  $\rho: Y \to X_w$ 
of quasi-smooth Calabi-Yau hypersurfaces $X_w \subset 
\P(w_0, w_1, \ldots, w_d)$ do not exist in general if  $d \geq 5$. 

\smallskip
Let us recall some definitions and facts on
$d$-dimensional weighted projective spaces $\P(\w)$ 
and quasi-smooth Calabi-Yau hypersurfaces in 
$\P(\w)$, where $\w \defeq (w_0, w_1, \ldots, w_d)$.

\begin{dfn} 
A weighted projective space 
$\P(\w) $ is a quotient of   
$\C^{d+1} \setminus \{0 \}$ by the $\C^*$-action  $\lambda \cdot(a_0, a_1, \ldots, a_d) 
=(\lambda^{w_0} a_0, \lambda^{w_1} a_1, \ldots, \lambda^{w_d} a_d)$ 
defined by the {\em weight vector} $\w=(w_0, w_1, \ldots, w_d) \in \Z^{d+1}_{>0}$, 
whose coordinates $w_i$ are called {\em weights}.  
A weighted projective space $\P(\w)$ is called {\em well-formed} if 
\[\ggT(w_0, \ldots, w_{i-1}, w_{i+1}, \ldots, w_d) =1 \;\; \forall
 i \in I \defeq \{0,1, \ldots, d\}. \] 
In this paper, we consider only well-formed weighted projective spaces $\P(\w)$.  
A weighted homogeneous polynomial $W \in \C[z_0, z_1, \ldots, 
z_d]$ of degree $k$ is characterized by the condition 
\[ W(\lambda^{w_0}z_0, \lambda^{w_1}z_1, \ldots, \lambda^{w_d}z_d) 
= \lambda^k \cdot W(z_0, z_1, \ldots, z_d). \]
Moreover, a weighted homogeneous polynomial $W$ of degree $k$ and the hypersurface 
$X_k \defeq \{ W({\bf z}) = 0 \}  \subset \P(\w)$ of degree $k$
are called {\em transverse} if the common zeros of all partial derivations
\[ \partial W/\partial z_i = 0 \;\;  ( 0 \leq i \leq d)\] is the point $0 \in \C^{d+1}$. 
A weight vector $\w= (w_0, w_1 , \ldots, w_d) \in \Z_{>0}^{d+1}$ is called {\em transverse}  
if there exists at least one transverse Calabi-Yau 
hypersurface $X_w \subset \P(\w)$ of degree 
$w = \sum_i w_i$. Another name, more often used by mathematicians, 
for transverse Calabi-Yau hypersurfaces is {\em quasi-smooth} Calabi-Yau hypersurfaces.
The quasi-smoothness (or transversality) condition 
ensures that the hypersurface $X_w$ has no singularities in addition to those coming
from the singularities of the ambient space \cite{KS98a}, where
the only singularities of weighted projective spaces are
cyclic quotient singularities \cite[Definition 11.4.5]{CLS11}. 
\end{dfn}

\begin{dfn} A weight vector $\w \in \Z^{d+1}_{>0}$ 
has the {\em $\text{IP}$-property} if 
\[ \Delta(W) \defeq \conv(  \{ (u_0, u_1, \ldots, u_d) \in \Z^{d+1}_{\geq 0} 
\, \vert \, \sum_{i=0}^d w_i u_i = w \} ) \subseteq \R^{d+1} \]
is a $d$-dimensional lattice polytope containing the lattice point 
$(1,1,\ldots, 1) \in \Z^{d+1}$ in its interior. 
\end{dfn}

By \cite[Lemma 2]{Ska96}, any transverse weight vector 
$\w \in \Z^{d+1}_{>0}$ has the $\text{IP}$-property. 
For any fixed dimension $d$, there exist only finitely many 
transverse weight vectors and only finitely many 
weight vectors $\w \in \Z_{> 0}^{d+1}$ with $\text{IP}$-property. 
The numbers $T(d)$ and $\text{IP}(d)$ are known for $d \leq 5$: 

\begin{itemize}
\item[-] $T(2) = \text{IP}(2) =3$:  $\{ \w \} = \{(1,1,1), (1,1,2), (1,2,3) \}$;
\item[-]  $T(3) = \text{IP}(3) = 95$   \cite{Rei80};
\item[-]  $T(4) = 7,\!555$ \cite{KS92,KS94}, 
$\text{IP}(4) =184,\!026$ \cite{Ska96};
\item[-]  $T(5) = 1,\!100,\!055$ 
\cite{LSW99,BK16}, 
$\text{IP}(5) =322,\! 383,\! 760,\! 930 $ \cite{SchS19}. 
\end{itemize}

The following statement is a general result of 
Artebani, Comparin, and Guilbot \cite[Theorem 1]{ACG16}
in the particular case of weighted projective spaces:

\begin{prop}
Let $\w = (w_0, w_1, \ldots, w_d)$ be a weight vector 
with $\text{IP}$-property. Then a general hypersurface 
$X_w \subset \P(\w)$ of degree $w = \sum_i w_i$ is 
a Calabi-Yau variety. 
\end{prop}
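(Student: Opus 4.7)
The plan is to combine a Bertini-type genericity argument with the adjunction formula on the toric variety $\P(\w)$. The key algebraic input is that in a well-formed weighted projective space the anticanonical class equals the sum $\sum_{i=0}^d D_i$ of torus-invariant divisors and corresponds to the line bundle $\calo_{\P(\w)}(w)$ with $w = \sum_i w_i$. Hence a hypersurface of degree $w$ is an anticanonical section, and once it is reduced and irreducible the adjunction formula yields $K_{X_w} \cong \calo_{X_w}$.

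First I would translate the $\text{IP}$-property into the existence of a distinguished global section. The sections of $\calo_{\P(\w)}(w)$ are spanned by monomials $z_0^{u_0} \cdots z_d^{u_d}$ indexed by lattice points $(u_0, \ldots, u_d) \in \Delta(W)$, and the interior lattice point $(1,1,\ldots,1)$ gives the monomial $z_0 z_1 \cdots z_d$ whose vanishing locus is exactly the torus-invariant anticanonical divisor $\sum_i D_i$. Via the Poincar\'e residue construction, the meromorphic form $\frac{z_0 \cdots z_d}{W(\mathbf{z})}\,\Omega$ (with $\Omega$ the standard logarithmic top form on the open torus of $\P(\w)$) pushes down to a nowhere-vanishing holomorphic $d$-form on the smooth locus of $X_w$, explicitly trivializing $K_{X_w}$ there.

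Next I would handle genericity and irreducibility. Because $\Delta(W)$ is $d$-dimensional, the linear system $|\calo_{\P(\w)}(w)|$ separates points on the open torus of $\P(\w)$, and a Bertini-type statement for toric varieties implies that for a general $W$ the hypersurface $X_w$ is irreducible, reduced, and meets each torus orbit properly in the ambient stratification. Combined with the explicit trivialization of $K_{X_w}$ on the smooth locus, this upgrades to $K_{X_w} \sim 0$ as a $\Q$-Cartier Weil divisor on all of $X_w$, so that $X_w$ is Calabi-Yau in the sense adopted throughout the paper.

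The step I expect to be the main obstacle is the singularity analysis: one must verify that the cyclic quotient singularities inherited from $\P(\w)$ do not obstruct the Calabi-Yau property, i.e.\ that $X_w$ has at worst log-terminal (indeed canonical) singularities and that the trivializing $d$-form extends across the singular strata. This is where the well-formedness of $\P(\w)$ and the proper intersection of $X_w$ with the torus-orbit stratification become crucial, invoking the standard Newton-polytope machinery for non-degenerate hypersurfaces (cf.\ \cite{KS98a}) to guarantee that the adjunction-based trivialization survives globally.
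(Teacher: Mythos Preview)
The paper does not supply its own proof of this proposition: it is stated purely as a citation, namely the specialization of \cite[Theorem~1]{ACG16} to the case where the ambient $\Q$-Fano toric variety is a weighted projective space. So there is no argument in the paper to match your sketch against.

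Your outline is the natural direct approach and is correct in its architecture: the anticanonical class of $\P(\w)$ is $\calo_{\P(\w)}(w)$, the $\text{IP}$-property furnishes the interior monomial $z_0\cdots z_d$ and hence the Poincar\'e residue trivialization (though note the residue yields a $(d-1)$-form on $X_w$, not a $d$-form), and Bertini on the dense torus gives irreducibility and reducedness. Where your sketch differs from the cited source is in the treatment of singularities. You propose a local analysis of the cyclic quotient singularities inherited from $\P(\w)$, combined with the non-degeneracy condition on the torus-orbit stratification. The result of Artebani--Comparin--Guilbot instead works globally in the toric category: they show, for any $\Q$-Fano toric variety whose associated polytope data satisfy the combinatorial criterion in Theorem~\ref{Bat17223}, that the Zariski closure of a non-degenerate affine hypersurface in a suitable toric compactification has canonical singularities (cf.\ Remark~\ref{ifpart}). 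This packages the singularity check into the combinatorics of canonical Fano polytopes rather than a stratum-by-stratum extension of the holomorphic volume form. Your route would work, but filling in the step you correctly flag as the main obstacle would amount to reproving a chunk of \cite{ACG16} by hand; the paper is content to invoke it.
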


\smallskip
The present paper is inspired  
by mirror symmetry and the following formula of Vafa for  
the orbifold Euler number  $\chi_{\rm orb}(X_w)$ of 
quasi-smooth Calabi-Yau hypersurfaces 
$X_w  \subset P(\w)=\P(w_0,w_1, \ldots ,w_d)$
of degree $w= \sum_{i=0}^d w_i$ for arbitrary $d\geq 2$:
\begin{align} \label{vafa0}
\chi_{\rm orb}(X_w) = \frac{1}{w} 
\sum_{l,r=0}^{w-1} \, \, \prod_{0 \leq i \leq d \atop lq_i,rq_i \in \Z}
\left(1 - \frac{1}{q_i} \right),
\end{align}
where $q_i \defeq  \frac{w_i}{w}$ for all $i \in I = \{0,1,\ldots,d\}$ and one assumes  
$\prod_{0 \leq i \leq d \atop lq_i,rq_i \in \Z} \left( 1 - \frac{1}{q_i} \right) =1$
if $lq_i,rq_i \not\in \Z$  for all $i \in I$. 
It is remarkable that Formula \eqref{vafa0} appeared without using 
algebraic geometry as the {\em Witten index} ${\rm Tr}(-1)^F$ of the $N=2$ 
superconformal Landau-Ginzburg field theory defined by a weighted homogeneous superpotential $W$ 
of degree $w$ \cite[Formula (28)]{Vaf89}. 

\smallskip
A first general mathematical ($K$-theoretic) interpretation  
of Vafa's Formula \eqref{vafa0} was obtained by Ono and Roan:

\begin{theopar}[{\cite[Theorem 1.1]{OR93}}]  \label{K-th} 
Let $S^{2d+1} \subseteq \C^{d+1} \setminus \{0 \}$ be 
the unit sphere. Consider the compact smooth $(2d-1)$-dimensional real manifold 
 $S_w \defeq S^{2d+1} \cap \{ W = 0\}$  together with the 
 $S^1$-fibration $S_w \to X_w$ which is the restriction of the Seifert $S^1$-fibration 
$S^{2d+1} \to \P(w_0,w_1, \ldots ,w_d)$ to a quasi-smooth Calabi-Yau hypersurface 
$X_w$ of degree $w = \sum_{i=0}^d w_i$. 
Then the $S^1$-equivariant $K$-groups $K^i_{S^1}(S_w)$  $(i=0,1)$ 
have finite rank and 
\[ {\rm rank}\,  K^0_{S^1}(S_w) -  {\rm rank}\,  K^1_{S^1}(S_w) = 
\frac{1}{w} \sum_{l,r=0}^{w-1} \, \, \prod_{0 \leq i \leq d \atop lq_i,rq_i \in \Z}
\left( 1 - \frac{1}{q_i} \right). \]
In particular, 
the right side of this equation is an integer.
\end{theopar}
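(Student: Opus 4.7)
The plan is to combine the Atiyah--Segal localization theorem for $S^1$-equivariant $K$-theory with an explicit analysis of the fixed loci of torsion elements of $S^1$ acting on $S_w$.

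First I would observe that the $S^1$-action $\lambda \cdot (z_0, \ldots, z_d) = (\lambda^{w_0} z_0, \ldots, \lambda^{w_d} z_d)$ restricts to $S_w$ and has only finite stabilizers: the stabilizer at a point is the cyclic subgroup $\{\lambda \in S^1 : \lambda^{w_i} = 1 \text{ whenever } z_i \neq 0\}$, which is finite by well-formedness of $\P(\w)$, and only finitely many isotropy types arise on the compact manifold $S_w$. This guarantees that $K^*_{S^1}(S_w)$ is a finitely generated module over $R(S^1) = \Z[t,t^{-1}]$ supported at finitely many closed points of $\operatorname{Spec}(R(S^1) \otimes \C)$, so the ranks in the statement are well-defined and finite.

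Next, I would apply the Atiyah--Segal localization theorem: after tensoring with $\C$, the equivariant $K$-theory $K^*_{S^1}(S_w)_{\C}$ decomposes as a direct sum of contributions localized at the torsion characters $\zeta_w^l \in S^1$ for $l = 0, 1, \ldots, w-1$. The contribution at each $l$ is supported on the fixed locus $S_w^{(l)} = S_w \cap \{z_i = 0 : lq_i \notin \Z\}$, where $q_i = w_i/w$. Iterating the localization with respect to the residual $S^1$-action on each $S_w^{(l)}$ introduces a second index $r$ and doubly-fixed strata $S_w^{(l,r)} = S_w \cap \{z_i = 0 : lq_i \notin \Z \text{ or } rq_i \notin \Z\}$. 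On each stratum the rank contribution factors through the equivariant Euler class of the normal bundle, whose eigenvalues are governed by the weights $w_i$ of the coordinates that have been set to zero. A direct computation should then yield the factor $\prod_{lq_i, rq_i \in \Z}(1 - 1/q_i)$, with the convention that the empty product equals $1$ (the case in which no coordinate is forced to vanish). Summing the contributions over all pairs $(l,r) \in (\Z/w\Z)^2$ and dividing by $w$, which accounts for averaging over the $S^1$-orbit structure, should reproduce Vafa's formula.

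The main obstacle will be the bookkeeping of the normal-bundle weights in the second localization step, in particular checking that the contribution from each pair $(l,r)$ collapses to exactly $\prod (1 - 1/q_i)$ rather than to some more complicated rational expression, and that the identification of the complement $\{z_i \neq 0 : lq_i \in \Z\}$ with its smooth, properly stratified quotient is handled correctly at the strata of higher isotropy. Here the Calabi--Yau condition $w = \sum_{i=0}^d w_i$ plays an essential role, guaranteeing that the assembled double sum, after division by $w$, yields an integer --- consistent with the automatic integrality of the left-hand side.
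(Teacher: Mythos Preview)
The paper does not prove this theorem at all: it is quoted verbatim as \cite[Theorem 1.1]{OR93} and serves only as background motivation for Vafa's formula. There is therefore no ``paper's own proof'' to compare your proposal against; any assessment has to be on the merits of your sketch alone.

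Your outline is in the right spirit --- Atiyah--Segal localization is indeed the engine behind the Ono--Roan argument --- but the passage from a single localization to the double sum over $(l,r)$ is not correct as you have written it. Localizing $K^*_{S^1}(S_w)\otimes\C$ at the character $\zeta_w^l$ lands you on the ordinary $K$-theory (or rather, the $S^1$-equivariant $K$-theory supported near the identity) of the fixed set $S_w^{(l)}$, and this fixed set still carries a residual $S^1$-action with finite stabilizers. What you then need is not a ``second localization'' producing another index $r$, but rather a computation of the $S^1$-equivariant Euler characteristic of $S_w^{(l)}$ itself; this is where the inner sum over $r$ actually arises, via the same mechanism applied to the smaller weighted link. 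In other words, the double sum is a consequence of an induction on the number of nonzero coordinates, not of two independent localizations. Your phrase ``iterating the localization with respect to the residual $S^1$-action \ldots\ introduces a second index $r$'' conflates these two things.

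A second gap is the claim that the normal-bundle contribution ``collapses to exactly $\prod(1-1/q_i)$''. The factor $1-1/q_i$ is not an equivariant Euler class; it is (up to sign) the Euler characteristic of a single Milnor fibre factor $\{z_i^{w/w_i}=1\}$ minus a point, and it enters because the fixed locus $S_w^{(l)}$ is itself the link of a Brieskorn--Pham type singularity whose Euler characteristic is multiplicative in the variables. You would need to invoke the Milnor-fibre calculation (or the Sebastiani--Thom theorem) explicitly, and to check that the quasi-smoothness of $W$ guarantees each $S_w^{(l)}$ is again smooth. Without that, the factor you wrote down is unjustified.
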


Quasi-smooth Calabi-Yau hypersurfaces 
$X_w \subset \P(\w)=\P(w_0,w_1, \ldots ,w_d)$ can be considered 
as orbifolds, \emph{i.e.}, as geometric objects that are locally quotients
of smooth manifolds by finite group actions. The  
orbifold cohomology theory developed by Chen and Ruan \cite{CR04,ALR07} 
allows to define for any projective orbifold $V$ certain vector spaces 
$H^{p,q}_{CR}(V)$, whose dimensions $h^{p,q}_{\rm orb}(V)$
are called {\em orbifold Hodge numbers}. In particular, one obtains 
\[ \chi_{\rm orb}(V) \defeq \sum_{p,q} (-1)^{p+q} h^{p,q}_{\rm orb}(V). \]
Another mathematical approach to orbifold Hodge and orbifold Euler 
numbers was suggested  in \cite{BD96,Bat98} via so-called 
{\em stringy Hodge numbers} 
$h^{p,q}_{\rm str}(X)$ that were defined for arbitrary 
projective algebraic varieties $X$ with at worst 
Gorenstein canonical singularities. 
We note that such singularities do not necessarily 
admit a local orbifold structure.  
The mathematical definition of stringy Hodge numbers $h^{p,q}_{\rm str}(X)$ 
uses an arbitrary desingularization $\rho : Y \to X$. Their definition  
immediately implies the equalities
$h^{p,q}_{\rm str}(X) =h^{p,q}(Y)$ 
if the desingularization  $\rho$ is crepant \cite{Bat98}.  

\smallskip
Our consideration of the orbifold Euler number   
is motivated by the combinatorial mirror duality 
for Calabi-Yau hypersurfaces $X$ in $d$-dimensional 
Gorenstein toric Fano varieties  \cite{Bat94}.  
This mirror duality is based on the  
polar duality $\Delta \leftrightarrow \Delta^*$ between $d$-dimensional 
reflexive lattice polytopes $\Delta$ and 
$\Delta^*$. Recall that a $d$-dimensional lattice polytope 
$\Delta \subseteq M_\R \cong \R^d$  
is called {\em reflexive} if it contains the lattice point 
$0 \in M$ in its interior and the {\em dual polytope} 
\[ \Delta^* \defeq \{ y \in N_\R \, \vert\, 
\langle x, y \rangle \geq -1 \;\; \forall x \in \Delta \} \subseteq N_\R \]
is also a lattice polytope. If $\Delta $ is reflexive, then 
$\Delta^*$ is also reflexive and $(\Delta^*)^* = \Delta$. 
The polar duality induces a natural
one-to-one correspondence $\theta \leftrightarrow \theta^*$ 
between $k$-dimensional faces $\theta \preceq\ \Delta$ of $\Delta$ and 
$(d-k-1)$-dimensional faces $\theta^* \preceq \Delta^*$ of $\Delta^*$. 
If $X$ is a general Calabi-Yau hypersurface in the Gorenstein toric Fano variety 
$\P_\Delta$ associated with the normal fan $\Sigma_{\tope}$ of $\Delta$, 
then the {\em stringy Euler number}
of $X$ can be computed by the following combinatorial 
formula \cite[Cor. 7.10]{BD96}: 
\begin{align}  \label{comb-Euler}
\chi_{\rm str}(X)= \sum_{k=1}^{d-2} (-1)^{k-1} \sum_{ \theta \preceq \Delta \atop \dim(\theta) = k} 
\Vol_k (\theta) \cdot \Vol_{d-k-1} (\theta^*),  
\end{align} 
where $\Vol_{\dim(\cdot)}(\cdot) \in \N$ denotes the normalized volume (cf. Definition \ref{normvol}). 
Using Formula \eqref{comb-Euler} and the bijection $\theta \leftrightarrow \theta^*$, 
 one immediately obtains the equality 
\begin{align*} 
\chi_{\rm str}(X) = (-1)^{d-1} \chi_{\rm str}(X^*), 
\end{align*}
where $X^*$ denotes the mirror of $X$ obtained as a 
Calabi-Yau hypersurface in the Gorenstein toric Fano 
variety $\P_{\Delta^*}$. Moreover, by combinatorial 
methods one can show the following equalities of stringy Hodge numbers 
\[h^{p,q}_{\rm str}(X) = h^{d-1-p,q}_{\rm str}(X^*) \;\; (0 \leq p,q \leq d-1) \]
predicted by mirror duality \cite{BB96}. 

\begin{rem}
The following combinatorial property of the polar duality 
for reflexive polytopes is crucial for the combinatorial  Mirror Construction \ref{conj-mirr}: 

\begin{itemize} 
\item[] The set of  generators of  $1$-dimensional cones $\sigma$ in the normal fan 
$\Sigma_\Delta \subseteq N_\R$ of a reflexive polytope $\Delta \subseteq M_\R$ 
is the set of vertices of the dual reflexive polytope 
$\Delta^* \subseteq N_\R$.  
\end{itemize}
\end{rem}

We consider a well-formed weighted projective space  $\P(\w)$ as 
a projective toric variety corresponding to a $d$-dimensional 
simplicial fan $\Sigma(\w)$ in $N_{\w} \otimes \R$ whose $1$-dimensional
cones are spanned by primitive lattice vectors 
$v_0, v_1, \ldots, v_d \in N_{\w}$ generating the lattice $N_{\w}$ and satisfying the linear 
relation $\sum_{i=0}^d w_i v_i =0$. In this way, the lattice 
$N_{\w}$ can be identified with the quotient lattice $\Z^{d+1}/\Z\w$. 
 
 \smallskip
 If $\w \in \Z_{>0}^{d+1}$ is a weight vector having the $\text{IP}$-property, then 
 the $d$-dimensional lattice polytope
 $\Delta'(W) \defeq  \Delta(W) - (1, \ldots, 1) \subseteq M_{\w} \otimes \R$ 
 contains the origin $0 \in M_{\w}$ in its interior. 
We define the {\em spanning fan}  $\Sigma^{\vee}(\w)$ of $\Delta'(W)$ as 
 the fan  in $M_{\w} \otimes \R$  consisting  of all cones $\R_{\geq 0}\theta'$ 
 over faces $\theta' \preceq \Delta'(W)$ of  the lattice polytope $\Delta'(W)$. 
 By the theory of toric varieties \cite{CLS11},  the fan  $\Sigma^{\vee}(\w)$ defines 
 a $d$-dimensional $\Q$-Gorenstein toric Fano variety $\P^{\vee}(\w)$, which is a 
compactification of the algebraic torus $\T_{\w}$ with the lattice of 
characters $N_{\w}$, \emph{i.e.},
 \[\T_{\w} \defeq \{(x_0,x_1,\ldots,x_d) \in  (\C^*)^{d+1} \, \vert \,  \prod_{i=0}^d x_i^{w_i} =1 \}
\subset  (\C^*)^{d+1}. \]
 
Using several results from \cite{Bat17}, 
we prove the following main statement of our paper:
 
\begin{theo} \label{vafemirrortest}
Let  $\w = (w_0, w_1, \ldots, w_d)$ be a weight vector with $\text{IP}$-property
and $Z_{\w} \subset \T_{\w}$ a non-degenerate affine
hypersurface defined by a Laurent 
polynomial $f_{\w}$ with Newton polytope
 $\Delta_{\w}^*= \conv(\{v_0, \ldots, v_d\}) \subseteq N_{\w} \otimes \R$. 
Then the Zariski closure of $Z_{\w}$ 
in the $\Q$-Gorenstein toric Fano variety $\P^{\vee}({\w})$
is a Calabi-Yau hypersurface $X_{\w}^*$ and  
\[  \chi_{\rm str}(X^*_{\w})  =  (-1)^{d-1} \frac{1}{w} \sum_{l,r=0}^{w-1} \, \, 
\prod_{0 \leq i \leq d \atop lq_i,rq_i \in \Z} \left( 1 - \frac{1}{q_i} \right), \]
where $q_i = \frac{w_i}{w}$ $($$i \in I$$)$. 
In particular, if $\w$ is a transverse weight vector, \emph{i.e.}, there exists
a quasi-smooth Calabi-Yau hypersurface 
$X_w \subset \P(\w)$ of degree $w = \sum_i w_i$, then 
\[ \chi_{\rm str}(X_{\w}^*) = (-1)^{d-1} \chi_{\rm orb}(X_w). \]
\end{theo}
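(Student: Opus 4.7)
The plan is to split the theorem into two assertions — that $X_{\w}^*$ is Calabi-Yau and that $\chi_{\rm str}(X_{\w}^*)$ equals the displayed double sum — and then to conclude the transverse case from \eqref{vafa0}. For the first assertion, a direct computation using the linear relation $\sum_i w_i v_i = 0$ identifies $\Delta'(W) \subseteq M_{\w}\otimes\R$ as the polar dual of $\Delta_{\w}^* \subseteq N_{\w}\otimes\R$ in the $\Q$-reflexive sense: the linear functional paired with $v_i$ on $\Delta_{\w}^*$ takes the value $(w-w_i)/w_i$ at $v_i$ and $-1$ at the remaining $v_j$, matching the (possibly rational) vertex of $\Delta'(W)$ lying in the direction of the $i$-th coordinate axis. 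Consequently, the spanning fan $\Sigma^{\vee}(\w)$ coincides with the normal fan of $\Delta_{\w}^*$, so $\P^{\vee}(\w)\cong \P_{\Delta_{\w}^*}$ is the $\Q$-Gorenstein toric Fano variety with anticanonical polytope $\Delta_{\w}^*$. The closure in $\P^{\vee}(\w)$ of a non-degenerate Laurent hypersurface $Z_{\w}$ whose Newton polytope equals this anticanonical polytope is then automatically Calabi-Yau by standard toric geometry.

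To compute $\chi_{\rm str}(X_{\w}^*)$, I would use the generalization of \eqref{comb-Euler} to the $\Q$-Gorenstein Fano setting provided by \cite{Bat17}. This expresses $\chi_{\rm str}(X_{\w}^*)$ as an alternating sum over proper faces $\theta^* \preceq \Delta_{\w}^*$ of products of $\Vol_k(\theta^*)$ with a \emph{stringy volume} of the dual face $\theta' \preceq \Delta'(W)$ that absorbs the rationality of its vertices. Because $\Delta_{\w}^*$ is a simplex, its faces are indexed by proper subsets $J \subsetneq I$ via $\theta_J^* = \conv\{v_j : j \in J\}$, and $\Vol_{|J|-1}(\theta_J^*)$ is expressible through $(w_j)_{j\in J}$ and appropriate $\gcd$'s. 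To match this face-indexed sum with Vafa's double sum, I observe that the congruence conditions $lq_i, rq_i \in \Z$ — equivalent to $w\mid lw_i$ and $w\mid rw_i$ — pick out, for each pair $(l,r)$, the complementary subset $J(l,r) \subseteq I$ of indices on which a factor $(1-1/q_i) = (w-w_i)/w$ does appear in \eqref{vafa0}. Regrouping the double sum by $J(l,r)$, the $J$-contribution should become a $\Z/w\Z$-orbit count attached to the face $\theta_J^*$ times the expected volume, matching up to the overall sign $(-1)^{d-1}$ with the stringy contribution of the dual face $\theta_J' \preceq \Delta'(W)$ supplied by \cite{Bat17}.

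The main obstacle will be this last combinatorial matching: producing the explicit bijection between $(l,r)$-contributions and face-wise stringy data, and managing correctly the signs, the $\gcd$-factors hidden inside $\Vol_{|J|-1}(\theta_J^*)$, and the non-lattice vertices of $\Delta'(W)$ via the stringy $E$-function machinery of \cite{Bat17}. Once this combinatorial identity is in place, the transverse-case equality $\chi_{\rm str}(X_{\w}^*) = (-1)^{d-1}\chi_{\rm orb}(X_w)$ follows immediately from the Landau-Ginzburg interpretation of the right-hand side of \eqref{vafa0}.
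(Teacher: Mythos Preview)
Your overall strategy matches the paper's: apply the $\chi_{\rm str}$ formula from \cite{Bat17} to the simplex $\Delta_{\w}^*$, compute the face volumes (which bring in the $\gcd$'s $n_J$), rewrite Vafa's double sum as a sum over subsets $J\subseteq I$ by grouping according to which $q_i$ satisfy $lq_i,rq_i\in\Z$, and match the two expressions via the complementation $J\leftrightarrow\overline{J}$ and $|J|+|\overline{J}|=d+1$. But there is a genuine gap in your first paragraph. You claim that $\Delta'(W)$ is the polar dual of $\Delta_{\w}^*$ and hence that $\Sigma^{\vee}(\w)$ is the normal fan of $\Delta_{\w}^*$, so that $\P^{\vee}(\w)\cong\P_{\Delta_{\w}^*}$. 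This is false unless $\P(\w)$ is Gorenstein. The polar dual of $\Delta_{\w}^*$ is the \emph{rational} simplex $\Delta_{\w}$; the lattice polytope $\Delta'(W)$ is only $[\Delta_{\w}]=\conv(\Delta_{\w}\cap M_{\w})$, which is in general strictly smaller and has more vertices than $\Delta_{\w}$. Consequently $\Sigma^{\vee}(\w)$ is not the normal fan of $\Delta_{\w}^*$, and the Calabi-Yau property of the closure of $Z_{\w}$ in $\P^{\vee}(\w)$ is \emph{not} ``standard toric geometry'': it requires the result of \cite{Bat17} (Theorem~\ref{Bat17223} together with Remark~\ref{ifpart}) or \cite{ACG16} that a non-degenerate hypersurface with Newton polytope $\Delta$ admits a canonical Calabi-Yau model precisely when $[\Delta^*]$ is again canonical Fano, which here is exactly the IP-hypothesis.

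The same conflation of $\Delta'(W)$ with $\Delta_{\w}$ affects your second paragraph. The formula from \cite{Bat17} (Theorem~\ref{Bat17411}) does not pair $\Vol(\theta_J)$ with a ``stringy volume'' of a dual face in $\Delta'(W)$; it pairs it with the ordinary normalized volume of the rational pyramid $\sigma_{\theta_J}\cap\Delta_{\w}$ over the dual face of the \emph{rational} simplex $\Delta_{\w}=(\Delta_{\w}^*)^*$. The paper computes these two ingredients as $\Vol_{|J|-1}(\theta_J)=n_{\overline{J}}$ and $\Vol_{d-|J|+1}(\sigma_J\cap\Delta_{\w})=\frac{n_{\overline{J}}}{w}\prod_{i\in\overline{J}}\frac{1}{q_i}$ (Propositions~\ref{volume} and~\ref{v-normal}), and separately rewrites Vafa's sum as $\frac{1}{w}\sum_{|J|\le d-1}(-1)^{|J|}n_J^2\prod_{j\in J}\frac{1}{q_j}$ (Theorem~\ref{vafa-re}). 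Once you keep $\Delta'(W)$ and $\Delta_{\w}$ distinct, your plan goes through and coincides with the paper's proof.
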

 
The last statement in Theorem \ref{vafemirrortest} 
supports the following mirror construction:

\begin{constr} \label{conj-mirr}
Let $\w \in \Z_{>0}^{d+1}$ be an arbitrary transverse 
weight vector. Then mirrors of quasi-smooth Calabi-Yau hypersurfaces 
$X_w\subset \P(\w)$ can be obtained as Calabi-Yau compactifications 
$X_{\w}^*$ of non-degenerate affine hypersurfaces $Z_{\w} \subset \T_{\w}$ 
with Newton polytope $\Delta_{\w}^* = \conv(\{v_0, v_1, \ldots, v_d\})$.
\end{constr}

\begin{rem}
If the weighted projective space 
$\P(\w)=\P(w_0, w_1, \ldots, w_d)$ is not Gorenstein, then 
the lattice simplex $\Delta_{\w}^*$ is not reflexive and its 
dual simplex 
\[\Delta_{\w} =  \{ (u_0, u_1, \ldots, u_d) \in \R^{d+1}_{\geq 0} 
\, \vert \, \sum_{i=0}^d w_i u_i = w \}  - (1,\ldots, 1)\]
is a rational simplex containing 
the lattice polytope $\Delta'(W) \subseteq M_{\w} \otimes \R$. It  
follows from \cite[Theorem 2]{ACG16} that if a $\Q$-Gorenstein 
weighted projective space $\P(\w)$ contains
a Calabi-Yau hypersurface $X_w$ defined by an 
{\em invertible} weighted homogeneous polynomial 
$W$, then the lattice simplex $\Delta_{\w}^*$ is the Newton polytope
of the Berglund-H\"ubsch-Krawitz mirror of $X_w \subset \P(\w)$
\cite[Section 5]{CR18}, \cite{Kra10,CR11,Bor13}.
\end{rem}

\begin{rem}
Note that a weighted projective space $\P(\w)= \P(w_0,w_1, \ldots ,w_d)$ 
is a Gorenstein if and only if 
each weight $w_i \; ( 0 \leq i \leq d)$ divides the degree 
$w = \sum_{i=0}^d w_i$. In the latter case, one can 
choose a  quasi-smooth Calabi-Yau hypersurface  $X_w$ 
defined by the following weighted homogeneous polynomial 
of {\em Fermat-type}:
\[ W= z_0^{w/w_0} + z_1^{w/w_1} + \cdots + z_d^{w/w_d}
\in \C[z_0, z_1, \ldots, z_d]. \]
In this case, the two simplices $\Delta_{\w}^*$ and 
\[ \Delta_{\w} = \Delta'(W) =\Delta(W) - (1, \ldots, 1) \]
are reflexive and Mirror Construction \ref{conj-mirr} 
coincides with the orbifoldizing mirror 
construction of Greene-Plesser \cite{GP90}.
\end{rem}

\begin{rem}
The lattice simplex $\Delta_{\w}^* = \conv(\{v_0, v_1, \ldots, v_d\})$ 
is the Newton polytope of the special non-degenerate Laurent polynomial 
$f_{\w}^0({\bf t}) \defeq \sum_{i=0}^d {\bf t}^{v_i}  \in \C[N_w]$
defining the affine hypersurface 
\[Z_{\w}^0 = \T_{\w} \cap 
 \{(x_0,x_1,\ldots,x_d) \in (\C^*)^{d+1}   \,  \vert \, \sum_{i=0}^d x_i =0  \} 
\subset \T_{\w},\] 
\emph{i.e.},  $f^0_{\w}$ is the restriction of the linear function 
$x_0 + x_1 + \ldots + x_d: (\C^*)^{d+1} \to \C $ 
to the $d$-dimensional subtorus $\T_{\w} \subset  (\C^*)^{d+1}$. 
Note that the Laurent polynomial $f_{\w}^0$ appears in the Givental-Hori-Vafa mirror 
construction for the weighted projective space $\P(\w)$ \cite[Section 4.4]{CR18}, \cite{HV00}. 
The restriction  of the projection 
\[ (\C^*)^{d+1} \to (\C^*)^d, \;\; (x_0, x_1, \ldots, x_d) \mapsto
(x_1/x_0, \ldots, x_d/x_0) \]
to the affine hypersurface 
$Z_{\w}^0 \subset \T_{\w} \subset  (\C^*)^{d+1}$
defines an unramified cyclic  Galois covering      
$\gamma_w : Z_{\w}^0 \to Z_0$ of order $w$, where 
\[ Z_0 \defeq \{ (y_1, \ldots, y_d) \in (\C^*)^d \, \vert \,  
\sum_{i=1}^d y_i = -1 \} \subset (\C^*)^d. \] 
Therefore, the affine hypersurface $Z_{\w}^0$ can be obtained 
as a $(\Z/w\Z)^{d-1}$-quotient of the affine
Fermat hypersurface 
\[ F_w \defeq  \{ (y_1, \ldots, y_d) \in (\C^*)^d \, \vert \,  
\sum_{i=1}^d y_i^w = -1 \} \subset (\C^*)^d. \] 
Thus, the Givental-Hori-Vafa polynomial $f^0_{\w}$ defines a special 
"Fermat-type" point in the moduli space 
of mirrors of quasi-smooth 
Calabi-Yau hypersurfaces $X_w \subset \P(\w)$. 
The Givental-Hori-Vafa mirrors  $Z^0_{\w}$ were considered by Kelly \cite{Kel13} via 
so-called {\em Shioda maps} in the Berglund-H\"ubsch-Krawitz mirror constructions 
for Calabi-Yau hypersurfaces $X_w \subset \P(\w)$ defined by invertible 
polynomials $W$. 
\end{rem}
 
\begin{expar}[{\cite[Example 53]{CR18}}] 
Let   $\w =(w_0, w_1, \ldots, w_d)$ be a weight vector 
with $w_0 =1$. Then   $v_0 = - \sum_{i=1}^d w_i$ and 
the lattice vectors $v_1, \ldots, v_d \in \Z^d$ form 
a $\Z$-basis of $N_{\w}$. Thus, the Givental-Hori-Vafa 
polynomial $f^0_{\w} \in \C[N_{\w}]$ 
can be written  in the form
\[ f^0_{\w}({\bf t}) = \sum_{i=0}^d {\bf t}^{v_i} = 
\frac{1}{t_1^{w_1} \cdot \ldots \cdot t_d^{w_d}}+  t_1 + \ldots + t_d. \] 
\end{expar}

We note that we can not expect Mirror Construction \ref{conj-mirr}  
working for Calabi-Yau hypersurfaces $X_w  \subset \P(\w)$ 
that are not quasi-smooth. The following Examples \ref{not-a-mirror} 
and \ref{EulerQ} were proposed by Harald Skarke:

\begin{expar}[{\cite[Example 2]{KS92}}]  \label{not-a-mirror}
Consider the $\text{IP}$-weight vector  $\w =(1,1,6,14,21)$. 
One can show that  the weight vector $\w$
is not transverse. The $4$-dimensional lattice polytope
$\Delta'(W) \subset  \Delta_{\w}$ 
is reflexive \cite[Lemma 1]{Ska96}. 
Therefore, general Calabi-Yau hypersurfaces $X_{43} \subset \P(\w)$ 
are birational to smooth Calabi-Yau
threefolds $Y$ with  Hodge numbers 
$h^{1,1}(Y) = 21$, $h^{2,1}(Y) = 273$,  
and Euler number $\chi(Y) = - 504$   \cite{Bat94}.

\smallskip
On the other hand, by Theorem \ref{vafemirrortest},  
the Givental-Hori-Vafa hypersurface
$Z_{\w}^0 \subset (\C^*)^4$ defined by the equation 
\[ \frac{1}{t_1 t_2^6 t_3^{14} t_4^{21}} + t_1 +t_2 + t_3 + t_4 = 0  \]
is birational to a $3$-dimensional Calabi-Yau variety $X^*_{\w}$ 
with  stringy Euler number 
\[\chi_{\rm str}(X^*_{\w}) = - \frac{1}{43} 
\sum_{l,r=0}^{42} \prod_{0 \leq i \leq 4 \atop lq_i, rq_i \in \Z}
\left( 1 - \frac{1}{q_i} \right) =     506  \neq 504 = 
 - \chi_{\rm str}(X_{43}) = -\chi(Y).\]
Therefore, $X^*_{\w}$ is not mirror of $X_{43}$. 
\end{expar}

\begin{ex}\label{EulerQ} 
The weight vector  $\w =(1,1,2,4,5)$ has {\rm IP}-property.  
Since the  $4$-dimensional lattice polytope
$\Delta'(W) \subset  \Delta_{\w}$ is reflexive, 
a general Calabi-Yau hypersurface $X_{13} \subset \P(\w)$ 
is birational to a smooth Calabi-Yau 
threefold. On the other hand, 
by Theorem \ref{vafemirrortest},  
the Givental-Hori-Vafa hypersurface
$Z_{\w}^0 \subset (\C^*)^4$ defined by the equation 
\[ \frac{1}{t_1 t_2^2 t_3^{4} t_4^{5}} + t_1 +t_2 + t_3 + t_4 = 0  \]
is birational to a $3$-dimensional Calabi-Yau variety $X^*_{\w}$ 
with  stringy Euler number 
\[\chi_{\rm str}(X^*_{w}) = - \frac{1}{13} 
\sum_{l,r=0}^{12} \prod_{0 \leq i \leq 4 \atop lq_i, rq_i \in \Z}
\left( 1 - \frac{1}{q_i} \right) =   \frac{1032}{5} \not\in \Z. \]
Therefore, the Calabi-Yau variety $X^*_{\w}$ cannot have a  Landau-Ginzburg 
description and  $X^*_{\w}$ has no 
mirror at all. 
\end{ex}

\section{Stringy Euler numbers of toric Calabi-Yau hypersurfaces}

Let $M \cong \Z^d$ be a lattice of rank $d$, $N \defeq \Hom(M, \Z)$ 
its dual lattice, and 
\[\langle \cdot, \cdot \rangle : M \times N \to \Z\]
the natural pairing. Consider a $d$-dimensional lattice polytope 
$\tope \subseteq M_\R \defeq M \otimes_{\Z} \R \cong \R^d$ and
denote by $\Sigma_{\tope}$ the normal fan of $\tope$ in the dual real vector
space $N_\R \defeq N \otimes_{\Z} \R$. There exists a natural
one-to-one correspondence $\theta \leftrightarrow \sigma_{\theta}$  between 
$k$-dimensional faces $\theta \preceq \tope$ and $(d-k)$-dimensional
cones $\sigma_{\theta} \in \Sigma_{\tope}$, where 
\[ \sigma_{\theta}= \{ y \in N_\R\, | \, 
\min_{x \in \Delta} \langle x, y \rangle  =   \langle x', y \rangle  \; \; \forall x' \in \theta \}. \]   

\smallskip
We are interested in $d$-dimensional 
Newton polytopes $\Delta \subseteq M_\R$ of Laurent polynomials $f_{\tope} \in \C[M]$ 
defining non-degenerate affine hypersurfaces 
\[Z_{\tope}  \subset \T \defeq \Hom(M,\C^*) \cong (\C^*)^d\] 
that are birational to {\em Calabi-Yau varieties}
$X$, \emph{i.e.}, normal projective algebraic varieties with at worst 
canonical singularities and trivial canonical class. 
By \cite[Theorem on page 41]{Kho78}, such a Newton  
polytope $\Delta$ must be a {\em canonical Fano polytope}, 
\emph{i.e.}, a lattice polytope containing exactly one interior lattice 
point that we can assume to be $0 \in M$. 

\smallskip
We review two results from \cite{Bat17}.
The first is a combinatorial 
characterization of $d$-dimensional canonical  Fano polytopes $\tope$ 
such that $Z_{\tope}$ is birational to a  
Calabi-Yau hypersurface $X$:

\begin{theopar}[{\cite[Theorem 2.23]{Bat17}}]  \label{Bat17223}
Let $\Delta \subseteq M_\R$ be a $d$-dimensional canonical Fano polytope 
and $\tope^* = \{ y \in N_\R\; \vert \; \langle x, y \rangle \geq -1 \;\; \forall 
x \in \tope \}$ its rational dual polytope. We set 
\[[\tope^*] \defeq \conv( \tope^* \cap \; N) \subseteq N_\R \]
to be the convex hull  of all lattice points in $\tope^*$. 
Then a non-degenerate affine hypersurface $Z_{\tope} \subset \T$ 
is birational to a Calabi-Yau variety $X$ if and only if $[\tope^*]$ 
is also a $d$-dimensional canonical Fano polytope.  
\end{theopar}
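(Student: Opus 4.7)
My approach is to identify, on both sides of the equivalence, the combinatorial obstruction to producing a toric crepant compactification of $Z_{\Delta}$. The fundamental tool is the toric adjunction formula: for any complete simplicial fan $\Sigma \subseteq N_\R$ refining the normal fan $\Sigma_{\Delta}$, writing $h_{\Delta}(v) = -\min_{x\in \Delta}\langle x,v\rangle$ for the support function of $\Delta$ and $\bar Z_\Sigma$ for the Zariski closure of $Z_{\Delta}$ in $\P_\Sigma$, one has
\[
K_{\P_\Sigma} + \bar Z_\Sigma \;\sim\; \sum_{v}\bigl(h_{\Delta}(v) - 1\bigr)\, D_v,
\]
where the sum runs over the primitive generators of the rays of $\Sigma$. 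Since $0 \in \mathrm{int}(\Delta)$, one has $h_{\Delta}(v) \in \Z_{\geq 1}$ for every primitive $v \in N\setminus\{0\}$, with $h_{\Delta}(v) = 1$ precisely when $v \in \Delta^*\cap N$. In particular $\mathrm{int}[\Delta^*]\cap N \subseteq \mathrm{int}\,\Delta^*\cap N = \{0\}$, so $[\Delta^*]$ fails to be canonical Fano only if it is not $d$-dimensional or does not contain $0$ in its interior.

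\smallskip
For $(\Leftarrow)$, assume $[\Delta^*]$ is canonical Fano. I would take $\Sigma^\vee$ to be the spanning fan of $[\Delta^*]$: its completeness follows from $0$ being interior to $[\Delta^*]$, and every ray is generated by a vertex of $[\Delta^*]$, hence lies on $\partial \Delta^*$. The displayed formula then yields $\bar Z_{\Sigma^\vee}\sim -K_{\P_{\Sigma^\vee}}$, so adjunction gives $K_{\bar Z_{\Sigma^\vee}}\sim 0$. To upgrade this to canonical singularities I would pass to a projective crepant refinement $\Sigma'$ of $\Sigma^\vee$ obtained by subdividing along all primitive lattice points of $\partial[\Delta^*]$; the hypothesis $\mathrm{int}[\Delta^*]\cap N = \{0\}$ guarantees that no extra rays through interior lattice points are needed. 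The non-degeneracy of $f_{\Delta}$ ensures $\bar Z_{\Sigma'}$ meets each toric stratum transversally, and a Newton-polytope analysis combined with Reid's combinatorial criterion for canonical toric singularities then shows that $\bar Z_{\Sigma'}$ has at worst canonical singularities, so it is the desired Calabi-Yau model of $Z_{\Delta}$.

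\smallskip
For $(\Rightarrow)$, assume $Z_{\Delta}$ is birational to a Calabi-Yau variety $X$, and suppose for contradiction that $[\Delta^*]$ is not canonical Fano. Then in every complete simplicial fan $\Sigma$ refining $\Sigma_{\Delta}$ at least one ray $v$ must lie outside $\Delta^*\cap N$, forcing $h_{\Delta}(v)\geq 2$ and hence a strictly positive coefficient on $D_v$ in $K_{\P_\Sigma}+\bar Z_\Sigma$. Running the minimal model program on $\bar Z_\Sigma$ to reach a model birational to the canonical Calabi-Yau $X$ would require contracting the corresponding divisorial components of $\bar Z_\Sigma$, but the birational invariance of discrepancies — which interprets each such toric valuation as acquiring log-discrepancy strictly less than $1$ on $X$ — contradicts the canonical singularities of $X$. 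Hence $\Delta^*\cap N$ must surround $0$ and span $N_\R$, \emph{i.e.}, $[\Delta^*]$ is $d$-dimensional with $0$ in its interior, and by the preliminary observation it is canonical Fano.

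\smallskip
\textbf{Main obstacle.} The delicate step is the canonical-singularities part of $(\Leftarrow)$: the ambient $\Q$-Gorenstein toric variety $\P_{\Sigma'}$ may itself fail to be canonical, and one must exploit the interplay between the non-degeneracy of the Laurent polynomial $f_{\Delta}$ and the combinatorics of the two polytopes $\Delta$ and $[\Delta^*]$ to show that no worse-than-canonical singularities appear on the hypersurface. Technically this rests on the Newton-nondegenerate adjunction formalism together with Reid's criterion, but care is needed to track how the cones of $\Sigma'$ meet the Newton polytope $\Delta$ so that the bound on the discrepancies passes from the ambient toric variety down to the hypersurface.
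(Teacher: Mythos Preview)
This statement is not proved in the present paper; it is quoted from \cite[Theorem~2.23]{Bat17} and used as a black box, so there is no in-paper proof to compare against. The paper does record (Remark~\ref{ifpart}) that the $(\Leftarrow)$ direction was independently obtained in \cite{ACG16} via exactly the spanning-fan construction you propose, and the remark immediately following notes that the criterion is equivalent to the Fine interior of $\Delta$ being $\{0\}$, which is the invariant through which \cite{Bat17} organises the full proof. Your $(\Leftarrow)$ outline is sound and matches that argument; you rightly isolate the canonical-singularities check on the hypersurface as the technical crux and give a reasonable plan for it.

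Your $(\Rightarrow)$ argument, however, contains a genuine error. From $h_\Delta(v)\ge 2$ you correctly obtain that $E_v:=D_v\cap\bar Z_\Sigma$ appears in $K_{\bar Z_\Sigma}$ with coefficient $h_\Delta(v)-1\ge 1$. But this does \emph{not} produce a valuation of log-discrepancy $<1$ over $X$: comparing canonical divisors along a common resolution of $\bar Z_\Sigma$ and $X$, with $K_X=0$, yields $a(E_v,X)=h_\Delta(v)-1\ge 1$, which is perfectly compatible with $X$ being canonical. An effective canonical class on some projective birational model is precisely what one sees on a blow-up of a Calabi--Yau variety, not an obstruction to the existence of one; the contradiction you assert simply does not occur. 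The actual $(\Rightarrow)$ argument in \cite{Bat17} is of a different nature: it computes plurigenera of non-degenerate hypersurfaces in terms of the Fine interior of $\Delta$ and shows that when $[\Delta^*]$ fails to be a $d$-dimensional canonical Fano polytope the Kodaira dimension of any smooth projective compactification of $Z_\Delta$ is not zero. Your discrepancy heuristic does not access this, and as written the inequality on the log-discrepancy goes the wrong way.
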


\begin{rem} \label{ifpart}
The "if"-part in Theorem \ref{Bat17223} was independently proven by 
Artebani, Comparin, and Guilbot \cite[Theorem 1]{ACG16}.  
Moreover, they proved that a canonical Calabi-Yau model $X$ 
of the hypersurface $Z_\Delta \subset \T$ can be explicitly 
constructed, \emph{e.g.},  as the Zariski closure 
of $Z_\Delta$ in the toric $\Q$-Fano variety 
associated with the spanning fan of the canonical Fano polytope 
$[\Delta^*] \subseteq N_\R$.
\end{rem}

\begin{rem}
Canonical Fano polytopes $\tope \subseteq M_\R$ 
satisfying the criterion  in Theorem \ref{Bat17223} 
can be equivalently characterized by the condition 
that the Fine interior of $\tope$ is exactly its interior 
lattice point   
$\{0\}$ \cite[Section 2]{Bat17}, \cite{BKS19}. 
\end{rem}

\begin{rem}
A $d$-dimensional canonical Fano polytope $\Delta$ is called 
{\em pseudoreflexive} if it satisfies the condition 
\[ [[\Delta^*]^*] = \Delta.\]
It is easy to see that if $\Delta$ is pseudoreflexive, then 
$[\Delta^*]$ is also pseudoreflexive. Thus, pseudoreflexive 
polytopes satisfy the combinatorial duality $\Delta \leftrightarrow [\Delta^*]$, 
which was proposed by Mavlytov as a generalization of
the polar duality for reflexive polytopes \cite{Mav11}. We note 
that any pseudoreflexive polytope $\Delta$ of dimension 
$d \leq 4$ is reflexive. A simple example of 
a $5$-dimensional pseudoreflexive polytope $\Delta$ 
that is not reflexive can be obtained 
as the Newton polytope of a general $4$-dimensional Calabi-Yau hypersurface 
of degree $7$ in the weighted projective space $\P(1,1,1,1,1,2)$ \cite{KS98b}.  
It was expected by Mavlyutov that if  $(\Delta, [\Delta^*])$ is 
a pair of pseudoreflexive polytopes, then 
Calabi-Yau hypersurfaces in toric 
$\Q$-Fano varieties corresponding to rational 
polytopes $\Delta^*$ and $[\Delta^*]^*$   
are mirror symmetric to each other.  For Calabi-Yau 
hypersurfaces of degree $7$ in $\P(1,1,1,1,1,2)$ this 
is true, but in general this is false \cite[Section 5]{Bat17}. 
\end{rem}

\begin{rem} \label{almostpseudo}
Let $\Delta$ be an arbitrary canonical Fano polytope satisfying the 
condition in Theorem \ref{Bat17223}, 
\emph{i.e.}, $[\Delta^*]$ is canonical. In general, $\Delta$ 
need not to be pseudoreflexive.  However, 
the canonical polytopes $[\Delta^*]$ 
and $[[\Delta^*]^*]$ are always pseudoreflexive. For this 
reason, we call a canonical Fano polytope $\Delta$  
{\em almost pseudoreflexive} if  
$[\Delta^*]$ is also a canonical Fano polytope. 
The name is motivated by the fact that in this case 
$[[\Delta^*]^*]$ is the smallest pseudoreflexive 
polytope containing $\Delta$. So it is natural to call $[[\Delta^*]^*]$ 
the {\em pseudoreflexive hull} of $\Delta$. Since 
pseudoreflexive polytopes $\Delta$ of dimension $3$ and $4$ 
are reflexive, we will use 
the expressions  {\em almost reflexive} and {\em reflexive hull} 
instead of {\em almost pseudoreflexive} and {\em pseudoreflexive hull}, 
respectively. 
\end{rem} 

\begin{rem}
Recall some classification results concerning  
special classes of canonical Fano polytopes:
\begin{itemize}
\item Any $2$-dimensional 
canonical Fano polytope is reflexive. 
There exist exactly 16 of them. 
\item All $3$-dimensional canonical Fano polytopes 
are classified by Kasprzyk \cite{Kas10}. There exist exactly  $665,\!599$ 
$3$-dimensional almost reflexive polytopes among all $674,\!688$ 
$3$-dimensional canonical Fano polytopes. This list extends the classification of 
all $4,\!319$ reflexive $3$-dimensional polytopes obtained by Kreuzer and 
Skarke \cite{KS98a}. 
\item All $4$-dimensional reflexive polytopes are classified by 
Kreuzer and Skarke \cite{KS00}. This list consists of $473,\!800,\!776$ reflexive 
polytopes, but the list of all $4$-dimensional 
almost reflexive polytopes is still unknown. 
\end{itemize}
\end{rem}

The second result from \cite{Bat17} is a combinatorial formula 
for the stringy Euler number  $\chi_{\rm str} (X)$ of a  Calabi-Yau 
variety $X$ generalizing Formula \eqref{comb-Euler} 
for reflexive polytopes.  
We recall a general definition for the stringy Euler number
of a $d$-dimensional normal projective $\Q$-Gorenstein variety 
$X$ with at worst log-terminal singularities: 

\begin{dfn}
Let $\rho :Y \to X$ be a log-desingularization of $X$ whose 
exceptional locus is the union of smooth irreducible divisors $D_1, 
\ldots, D_s$ with only simple normal crossing and 
\[ K_Y = \rho^* K_X + \sum_{i =1}^s a_i D_i \] for some rational numbers 
$a_i > -1$ $(1 \leq i \leq s)$. We set  $D_\emptyset \defeq Y$ and 
\[D_J \defeq \bigcap_{j \in J} D_j, \; \;\; \emptyset \neq 
J \subseteq I\defeq \{1, \ldots, s\}.\]
Then the {\em stringy Euler number} $\chi_{\rm str}(X)$ 
is defined to be 
\[ \chi_{\rm str}(X) \defeq \sum_{ \emptyset \subseteq J \subseteq I}
\chi(D_J) \prod_{j \in J} \left(  \frac{-a_j}{a_j +1} \right),  \]
where $\chi(D_J)$ denotes the usual Euler number of 
the smooth projective variety $D_J$ \cite{Bat98}. 
\end{dfn}

\begin{rem}
It is important to note that the stringy Euler number is  
independent on the log-desingularization $\rho: Y \to X$
and $\chi_{\rm str}(X)$ equals the usual Euler number of $Y$ 
if $\rho: Y \to X$ is crepant. More generally, if two 
projective algebraic varieties $X$ and $X'$ are $K$-equivalent, 
then $\chi_{\rm str}(X) = \chi_{\rm str}(X')$. In particular, 
the stringy Euler number of a given birational class 
of algebraic varieties of non-negative Kodaira dimension is well-defined. 
\end{rem}

In addition, we will need the notion of the  
normalized volume of a rational polytope:

\begin{dfn} \label{normvol}
Let $\Delta \subseteq N_{\R}$ be a $d$-dimensional {\em rational polytope}, \emph{i.e.},
$\Delta$ has vertices in $N_{\Q} \defeq N \otimes \Q$. 
Then  the positive rational number 
\[\Vol_d(\Delta) \defeq \frac{1}{l^d} \Vol_d(l \Delta)= \frac{1}{l^d} \cdot d!\cdot \vol_d (l \Delta)\]
is called the {\em normalized volume} of $\Delta$,  
where $l$ is a positive integer such that $l\Delta$ is 
a lattice polytope and $\vol_d(\cdot)$ denotes the $d$-dimensional volume of 
$\Delta$ with respect to the lattice $N$. Similarly, one obtains 
the normalized volume $\Vol_k(\theta) \defeq 
\frac{1}{l^k} \Vol_k(l\theta)$ with respect to the sublattice $\text{span}(\theta) \cap N$ 
for a $k$-dimensional rational face
$\theta \preceq \Delta$.
\end{dfn}

\begin{theopar}[{\cite[Theorem 4.11]{Bat17}}] \label{Bat17411}
Assume that a $d$-dimensional canonical Fano polytope $\tope \subseteq M_\R$ 
satisfies the condition in Theorem \ref{Bat17223}, \emph{i.e.}, 
a non-degenerate affine hypersurface $Z_\Delta \subset \T$ 
is birational to a Calabi-Yau variety $X$. 
Then the stringy Euler number of $X$ can be computed 
by the following combinatorial formula
\begin{align*}  
\chi_{\rm str} (X)  = \sum_{k=1}^{d} (-1)^{k-1}  \sum_{ \theta \preceq \Delta \atop \dim (\theta) = k} 
\Vol_k (\theta) \cdot \Vol_{d-k} (\sigma_\theta \cap \Delta^*),  
\end{align*}
where $\sigma_\theta \cap \Delta^*$ is the $(d-k)$-dimensional 
pyramid with vertex $0 \in N$ over the $(d-k-1)$-dimensional dual face $\theta^* \preceq \Delta^*$
of the dual rational polytope $\Delta^*$. 
\end{theopar}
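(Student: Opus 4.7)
The plan is to compute $\chi_{\rm str}(X)$ via an explicit toric log-desingularization of a canonical model of $X$ and to rearrange the resulting combinatorial sum face by face of $\Delta$. By Remark \ref{ifpart}, one may realize $X$ as the Zariski closure of $Z_\Delta$ in the $\Q$-Gorenstein toric Fano variety $\P$ associated with the spanning fan $\Sigma$ of $[\Delta^*] \subseteq N_\R$. First I would pass to a projective regular simplicial refinement $\hat\Sigma$ of $\Sigma$ obtained from a lattice triangulation of $\partial[\Delta^*]$ (subdividing further if needed to achieve regularity). For a generic non-degenerate $f_{\Delta}$, the strict transform $\hat X \subset \hat\P$ is then smooth and meets the toric boundary of $\hat\P$ transversally by the standard Khovanskii-Bertini argument, yielding a log-desingularization of $X$. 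The log-discrepancy attached to the toric divisor $D_\rho$ corresponding to a ray $\rho \in \hat\Sigma(1)$ with primitive generator $v_\rho$ is read off from the gauge/support function $\varphi_{\Delta^*}$ of the rational polytope $\Delta^*$ via $a_\rho + 1 = \varphi_{\Delta^*}(v_\rho)$; in particular $a_\rho = 0$ exactly when $v_\rho \in \partial \Delta^*$.

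The second step is to decompose $\chi_{\rm str}(X)$ along the toric stratification $\hat X = \bigsqcup_{\sigma \in \hat\Sigma} (\hat X \cap \T_\sigma)$. Using the log-discrepancy formula
\[
\chi_{\rm str}(X) = \sum_{\sigma \in \hat\Sigma} \chi(\hat X \cap \T_\sigma) \prod_{\rho \in \sigma(1)} \frac{1}{a_\rho+1},
\]
a sub-torus fibration argument shows $\chi(\hat X \cap \T_\sigma) = 0$ whenever $\sigma$ is a \emph{proper} subcone of the unique cone $\sigma_\theta \in \Sigma$ containing $\sigma$ in its relative interior, since the intersection is then the preimage of $\hat X \cap \T_{\sigma_\theta}$ under a locally trivial $(\C^*)^{\dim\sigma_\theta - \dim\sigma}$-fibration and $\chi((\C^*)^m)=0$ for $m \geq 1$. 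Hence only maximal-dimensional cones $\sigma \subseteq \sigma_\theta$ in $\hat\Sigma$ contribute. For such $\sigma$ of dimension $d-k$, the intersection $\hat X \cap \T_\sigma$ is a generic non-degenerate hypersurface in the $k$-dimensional torus $\T_\sigma$ whose Newton polytope is the $k$-dimensional dual face $\theta \preceq \Delta$, so by the Khovanskii formula $\chi(\hat X \cap \T_\sigma) = (-1)^{k-1}\Vol_k(\theta)$, depending only on $\theta$ and not on $\sigma$. Regrouping yields
\[
\chi_{\rm str}(X) = \sum_{k=1}^{d} (-1)^{k-1} \sum_{\theta \preceq \Delta,\, \dim\theta = k} \Vol_k(\theta) \cdot S(\sigma_\theta),
\]
where $S(\sigma_\theta) \defeq \sum_\sigma \prod_{\rho \in \sigma(1)} \frac{1}{a_\rho+1}$ and the inner sum runs over maximal cones $\sigma \in \hat\Sigma$ contained in $\sigma_\theta$.

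The final and main step is the purely combinatorial identity $S(\sigma_\theta) = \Vol_{d-k}(\sigma_\theta \cap \Delta^*)$. Using $a_\rho + 1 = \varphi_{\Delta^*}(v_\rho)$, the rescaled vector $v_\rho/\varphi_{\Delta^*}(v_\rho)$ lies on $\partial\Delta^*$, and the simplices $\conv(\{0\} \cup \{v_\rho/\varphi_{\Delta^*}(v_\rho) : \rho \in \sigma(1)\})$ tile the rational pyramid $\sigma_\theta \cap \Delta^*$ as $\sigma$ ranges over the chosen maximal cones. Because $\hat\Sigma$ is regular, each such simplex has normalized $(d-k)$-volume exactly $\prod_\rho 1/\varphi_{\Delta^*}(v_\rho) = \prod_\rho 1/(a_\rho+1)$, and summing recovers $\Vol_{d-k}(\sigma_\theta \cap \Delta^*)$.

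I expect the main obstacle to be the combination of the vanishing argument and the tiling identity, because $\Delta^*$ is only rational and its support function $\varphi_{\Delta^*}$ is piecewise-linear over a non-lattice polyhedral structure; one must carefully track rational volumes and verify that the lattice triangulation of $\partial[\Delta^*]$ induces a compatible rational triangulation of $\partial\Delta^*$ via radial projection through $0$. A secondary concern is independence of the refinement $\hat\Sigma$, which follows abstractly from the log-resolution-independence of $\chi_{\rm str}$ and concretely from the fact that any two regular refinements of $\sigma_\theta$ triangulate the same pyramid $\sigma_\theta \cap \Delta^*$, whose total normalized volume is intrinsic.
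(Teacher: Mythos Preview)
This theorem is quoted in the paper from \cite[Theorem 4.11]{Bat17} and is not re-proved here; there is no ``paper's own proof'' to compare against. That said, your outline is close in spirit to how such formulas are obtained in \cite{Bat17} (via a toric log-resolution, the torus-orbit stratification, and Khovanskii's Euler number formula), so let me point out the substantive gaps.

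The main issue is a silent conflation of two different fans. You introduce $\Sigma$ as the spanning fan of $[\Delta^*]$ and refine it to $\hat\Sigma$, but the cones $\sigma_\theta$ in the statement belong to the \emph{normal fan} $\Sigma_\Delta$ of $\Delta$ (equivalently, the spanning fan of the rational polytope $\Delta^*$). In general $\Sigma$ neither refines nor is refined by $\Sigma_\Delta$, and your lattice triangulation of $\partial[\Delta^*]$ need not produce a fan refining $\Sigma_\Delta$. Yet both of your key steps require exactly this: (i) identifying the Newton polytope of $\hat X\cap\T_\sigma$ with a face $\theta\preceq\Delta$ needs $\sigma$ to lie in a single cone $\sigma_\theta$ of $\Sigma_\Delta$; (ii) the tiling identity $S(\sigma_\theta)=\Vol_{d-k}(\sigma_\theta\cap\Delta^*)$ needs the maximal cones of $\hat\Sigma$ inside $\sigma_\theta$ to actually cover $\sigma_\theta$. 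The fix is to choose $\hat\Sigma$ to be a regular simplicial refinement of a \emph{common} refinement of $\Sigma$ and $\Sigma_\Delta$; once you say this, the rest of your argument goes through.

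Two smaller points. First, your vanishing argument is phrased as a fibration ``over $\hat X\cap\T_{\sigma_\theta}$'', but $\sigma_\theta\in\Sigma_\Delta$ is not a cone of $\hat\Sigma$, so there is no such orbit in $\hat\P$. The correct argument is intrinsic: when $\dim\sigma<\dim\sigma_\theta$ the restricted Newton polytope $\theta$ has dimension strictly less than $\dim\T_\sigma$, so $\hat X\cap\T_\sigma$ carries a free action of a positive-dimensional subtorus and hence has Euler number zero. Second, the discrepancy formula $a_\rho+1=\varphi_{\Delta^*}(v_\rho)$ should be justified via adjunction on $\hat\P$, and you should check that the non-exceptional rays (those already present before refinement) contribute trivially, i.e.\ that $\varphi_{\Delta^*}(v_\rho)=1$ for those $v_\rho$; this is where the precise choice of the toric model for $X$ matters and is handled carefully in \cite{Bat17}.
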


\begin{ex}
Let $\tope \subseteq M_\R$ be a $3$-dimensional 
almost reflexive polytope. Then $X$ 
is a $K3$-surface with  $\chi_{\rm str} (X)=24$. In this case, the 
combinatorial formula in Theorem 
\ref{Bat17411} is equivalent to the identity 
\[ 24 = \Vol_3(\Delta) - \sum_{ \theta \prec \Delta \atop 
\dim (\theta) = 2} \frac{1}{n_\theta} \cdot
\Vol_2(\theta)  + \sum_{ \theta \prec \Delta \atop 
\dim (\theta) = 1} \Vol_1(\theta) \cdot \Vol_1(\theta^*), \]
where $n_\theta$ denotes the integral distance 
between a facet $\theta \preceq \Delta$ and $0 \in M$
\cite[Theorem A]{BS19}.   
\end{ex}

\begin{ex}
Let  $\tope \subseteq M_\R$ be a $d$-dimensional reflexive polytope. Then 
the dual polytope $\tope^*$ is also a reflexive polytope. 
Therefore, we obtain 
\[ \Vol_{d-k}(\sigma_\theta \cap \Delta^*) = 
\Vol_{d-k-1}(\theta^*)\] 
for any $k$-dimensional face $\theta \preceq \tope$ of $\tope$ $(1 \leq k \leq d-2)$. 
Together with the equality 
$ \Vol_d(\Delta) = \sum_{ \theta \preceq \Delta \atop \dim (\theta) = d-1} 
\Vol_{d-1} (\theta)$, Theorem \ref{Bat17411} implies the already discussed 
combinatorial Formula \eqref{comb-Euler}.
\end{ex}

\section{Calabi-Yau hypersurfaces in weighted projective spaces}

Let $(w_0, w_1, \ldots, w_d) \in \Z_{>0}^{d+1}$ be a 
well-formed weight vector.  
In this section, we apply Theorems \ref{Bat17223} and \ref{Bat17411}
to the special  $d$-dimensional lattice simplex 
\[\tope \defeq  \tope_{\w}^* = \conv(\{v_0, v_1, \ldots, v_d\}) \subseteq  N_{\w} \otimes \R,\] 
where $v_0, \ldots, v_d$ generate the lattice
$M  \defeq N_{\w} $ and satisfy the linear relation $\sum_{i=0}^d w_i v_i = 0$. 
Consider the lattice $\widetilde{M} \defeq M \oplus \Z$ together with $d+1$ 
linearly independent lattice vectors $\widetilde{v}_i \defeq (v_i,1) \in \widetilde{M}$ 
($0 \leq  i \leq d$). Set $M'\defeq \langle  \widetilde{v}_i \rangle_{0 \leq i \leq d}$ and 
identify $M'$ with $\Z^{d+1}$ via the basis $\widetilde{v}_i$ ($0 \leq  i \leq d$). 
Since $M$ is generated by ${v}_i$ ($0 \leq  i \leq d$), the lattice $\widetilde{M}$ 
is generated by $\widetilde{v}_i $ together with the lattice vector 
$\widetilde{v} \defeq (0, 1) \in M \oplus \Z= \widetilde{M}$ and 
\[ \widetilde{v} = \sum_{i=0}^d \frac{w_i}{w} \cdot \widetilde{v}_i. \]
So the quotient $\widetilde{M} / M'$ is a cyclic
group of order $w$ generated by $\widetilde{v} + M'$ and we can write
\[ \widetilde{M} =\Z \left( \frac{w_0}{w},  \frac{w_1}{w}, 
\ldots,  \frac{w_d}{w}\right) +  \Z^{d+1} = 
\Z( q_0, q_1, \ldots, q_d) + \Z^{d+1}. \]
In certain cases, it will be convenient to use the multiplicative 
description of $\widetilde{M} / M'$ by the cyclic group 
$G \defeq \langle g \rangle  = \{g^s \, |\, s \in \Z/w\Z \} \subseteq  SL(d+1, \C)$ 
generated by the diagonal matrix 
\[ g \defeq {\rm diag}(g_0, g_1, \ldots, g_d) 
\defeq {\rm diag}(e^{2\pi i q_0}, e^{2\pi i q_1}, \ldots, e^{2\pi i q_d}). \] 
In the language of  toric varieties, the $(d+1)$-dimensional 
simplicial cone $\R_{\geq 0}(1, \Delta) \subseteq \widetilde{M}_\R$ describes the 
Gorenstein cyclic quotient singularity $\C^{d+1}/G$ considered
by Corti and Golyshev \cite{CG11}. 

\begin{dfn}
For any non-empty subset $J \subseteq I$, we define two 
$\vert J \vert$-dimensional sublattices 
\[ M_J' \defeq \sum_{j \in J} \Z \widetilde{v_j} \; \; \text{ and } \;\; 
\widetilde{M}_J \defeq \widetilde{M} \cap (M_J' \otimes \Q), \]
and the subgroup
\[G_J\defeq \left\{ g^s \in G \, \middle \vert \,
g_j^s =1 \;\; \forall j \in J\right\} \subseteq G.\] 
\end{dfn}   

Each $k$-dimensional face $\theta \preceq \tope =\tope^*_w$ 
is a $k$-dimensional simplex 
\[ \theta_J \defeq \conv(v_{i_0}, v_{i_1}, \ldots, v_{i_k}) \]
determined by a $(k+1)$-element subset 
$J \defeq \{i_0, i_1, \ldots, i_k \} \subseteq I = \{0, 1,\ldots, d \}$. 
We set $\widetilde{\theta}_J$ to be 
the $(k+1)$-dimensional simplex 
\[\widetilde{\theta}_J \defeq \conv(0,\widetilde{v}_{i_0}, 
\widetilde{v}_{i_1}, \ldots, \widetilde{v}_{i_k}) .  \]

\begin{prop} \label{volume}
Let $\emptyset \neq J \subseteq I$ be a non-empty subset of $I$. Then 
\begin{itemize}
\item[{\rm (i)}] $\vert \widetilde{M}_J/ M_J' \vert = 
\Vol_{\vert J \vert}(\widetilde{\theta}_J) $; 
\item[{\rm (ii)}] $\vert G_J\vert = n_J \defeq \ggT (w,  w_j \, \vert \, j \in J )$; 
\item[{\rm (iii)}] $\Vol_{\vert J \vert-1}(\theta_J)  = 
n_{\overline{J}}= \vert G_{\overline{J}} \vert,$
where $\overline{J} \defeq I \setminus J$.  
\end{itemize}
\end{prop}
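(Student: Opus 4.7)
Parts (i) and (ii) are immediate. For (i), recall that for any lattice simplex $\conv(0, u_1, \ldots, u_n)$ whose non-origin vertices lie in a rank-$n$ lattice $L$, the normalized volume equals the index $[L : \langle u_1, \ldots, u_n\rangle]$; specializing to $\widetilde\theta_J$ with $L = \widetilde M_J$ and $\langle \widetilde v_j : j \in J\rangle = M'_J$ yields the claim. For (ii), since $G = \langle g\rangle$ is cyclic of order $w$, the condition $g^s \in G_J$ translates to $sw_j \equiv 0 \pmod w$ for every $j \in J$, i.e.\ $s$ is a multiple modulo $w$ of $\mathrm{lcm}_{j \in J}\bigl(w/\ggT(w,w_j)\bigr) = w/n_J$, using the elementary identity $\mathrm{lcm}(w/a_i) = w/\ggT(a_i)$ valid when all $a_i \mid w$; hence $|G_J| = n_J$.

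For (iii), I aim to establish the chain
\[\Vol_{|J|-1}(\theta_J) \,=\, \Vol_{|J|}(\widetilde\theta_J) \,=\, \bigl|\widetilde M_J/M'_J\bigr| \,=\, |G_{\overline{J}}| \,=\, n_{\overline{J}}.\]
The second equality is (i) and the last uses (ii) applied to $\overline{J}$. \emph{Step 1}: proof of the first equality. Write $\widetilde M = M \oplus \Z$ and let $h\colon\widetilde M\to\Z$ denote the projection onto the second coordinate. Each $\widetilde v_j = (v_j, 1)$ has height $h(\widetilde v_j) = 1$, so $h$ restricts to a surjection $\widetilde M_J\twoheadrightarrow\Z$. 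Its kernel identifies, via $(m,0)\mapsto m$, with the lattice $M_J \defeq \text{span}_\R(\theta_J - v_{j_0})\cap M$ used to compute $\Vol_{|J|-1}(\theta_J)$: on one hand, $(m,0)\in\widetilde M_J$ forces a rational relation $m = \sum_{j\in J}c_j v_j$ with $\sum c_j = 0$; conversely every $m\in M_J$ lifts back into $\widetilde M_J$ because $\widetilde M_J$ is the saturation of $M'_J$ in $\widetilde M$. Fix $j_0\in J$, split $\widetilde M_J = \ker(h)\oplus\Z\widetilde v_{j_0}$, and expand the $|J|\times|J|$ determinant $|\det(\widetilde v_j)_{j\in J}|$ along its constant row of ones: this reduces it to $|\det(v_j - v_{j_0})_{j\neq j_0}|$ computed in $M_J$, which is exactly $\Vol_{|J|-1}(\theta_J)$.

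\emph{Step 2}: proof of the third equality via a natural isomorphism $\widetilde M_J/M'_J\cong G_{\overline{J}}$. Composing the inclusion $\widetilde M_J\hookrightarrow\widetilde M$ with the quotient $\widetilde M\twoheadrightarrow G = \widetilde M/M'_I$, the kernel equals $\widetilde M_J\cap M'_I = M'_J$ (since $\widetilde M_J\subseteq M'_J\otimes\Q$ by definition), so we obtain an injection $\widetilde M_J/M'_J\hookrightarrow G$. Its image consists of classes $s\widetilde v + M'_I$ admitting a representative supported on $J$; after subtracting appropriate elements of $\Z^{d+1}$ this is equivalent to $sw_i/w\in\Z$ for every $i\in\overline{J}$, precisely the defining condition $g^s\in G_{\overline{J}}$. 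The most delicate step in the whole argument is the saturation-based identification $\ker(h)\cap\widetilde M_J \cong M_J$ in Step 1; everything else reduces to routine bookkeeping in the $\widetilde v_i$-basis using the relation $\widetilde v = \sum_i q_i\widetilde v_i$.
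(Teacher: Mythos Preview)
Your proof is correct and follows essentially the same approach as the paper: part (i) is the standard index--volume identification, part (iii) is proved via the same injection $\widetilde M_J/M'_J \hookrightarrow \widetilde M/M'$ with image $G_{\overline J}$, and you additionally supply the determinant argument for $\Vol_{|J|-1}(\theta_J)=\Vol_{|J|}(\widetilde\theta_J)$ that the paper leaves implicit. The only substantive variation is in (ii), where you invoke the divisor-lattice identity $\mathrm{lcm}_j(w/a_j)=w/\gcd_j(a_j)$ for $a_j\mid w$, whereas the paper argues via a B\'ezout relation; both are equally elementary.
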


\begin{proof} 
{\rm (i)} 
The normalized volume
$\Vol_{\vert J \vert}(\widetilde{\theta}_J)$ of $\widetilde{\theta}_J$ equals
the index of the sublattice $M'_J$ generated by the lattice vectors 
$\widetilde{v}_j \;( j \in J)$ in the $|J|$-dimensional lattice
$\widetilde{M}_J = \widetilde{M} \cap (M_J' \otimes \Q)$. 

\smallskip
{\rm (ii)} 
Assume without loss of generality $J = \{0,1,\ldots,k \}$. We 
set $u \defeq w/n_J \in \N$. Then one has  $u q_j = w_j/n_J  \in \Z$ for all
$j \in J $, \emph{i.e.}, $g^u \in G_J$.
This implies the inclusion $\langle g^u
\rangle \subseteq G_J$ and the inequality $n_J =  |\langle g^u
\rangle| \leq |G_J|$. In order to prove the opposite inclusion 
$G_J \subseteq \langle g^u
\rangle$, we set $u_j\defeq w_j/n_J \in \N$
for $j \in J$ and assume that  $g^s \in G_J$. 
Then $su_j/u=sq_j \in \Z$ for all $j \in J$, 
\emph{i.e.}, $u$ divides $su_j$ for all $j \in J$.
Since $\ggT(u, u_0, u_1, \ldots, u_k) =1$,  there
exist integers $a$ and $a_0, \ldots,a_k$ such that
\[ a u + \sum_{j \in J} a_j u_j = 1. \]
Therefore, $u$ divides  $s = asu +  \sum_{j \in J} a_j s u_j$,
\emph{i.e.}, $g^s \in  \langle g^u
\rangle$.

\smallskip
{\rm (iii)}
Note that the quotient $\widetilde{M}_J/ M_J'$ 
can be considered as a subgroup of the cyclic group
$\widetilde{M}/M' \cong \Z/w\Z$. Indeed, if we take the
homomorphism $\varphi\, :\,  \widetilde{M}_J \to \widetilde{M}/M'$ 
obtained from the embedding of $\widetilde{M}_J$ into $\widetilde{M}$,  then
the kernel of $\varphi$ is 
$\widetilde{M}_J \cap M' = \widetilde{M} \cap (M_J' \otimes \Q) \cap M' = M_J'$  
because $(M_J' \otimes \Q) \cap M' =  M_J'$. Furthermore, an element
$s(\widetilde{v} + M') \in \widetilde{M}/M'$ belongs to the
subgroup $\widetilde{M}_J/ M_J'$ if and only if 
the coefficients $s q_i $ in the equation
\[s \widetilde{v}  = \sum_{i =0}^d s q_i   \widetilde{v}_i \]
are integers for all $i \not\in J$.
By Proposition \ref{volume} (ii), the latter happens if and only if
$g^s \in G_{\overline{J}}$. This shows that  $\widetilde{M}_J/ M'_J \cong
G_{\overline{J}}$ and $\Vol_{k}(\theta_J) = \Vol_{k+1}(\widetilde{\theta}_{J}) =n_{\overline{J}}$.
\end{proof}

For the lattice simplex 
$\Delta = \Delta_{\w}^*= \conv(\{v_0, v_1, \ldots, v_d\})$ the dual polytope $\tope^*$ is 
the  $d$-dimensional rational simplex 
\[ \Delta^* = \tope_{\w} = \{ (u_0, u_1, \ldots, u_d) \in \R_{\geq 0}^{d+1} \, \vert \, 
\sum_{i=0}^d w_i u_i = w \}  - (1, 1,\ldots, 1).  \]
Let $J \subseteq I=\{0,1,\ldots,d\}$ be a (k+1)-element subset of $I$. 
Then we denote by $\sigma_J \defeq \sigma_{\theta_J}$ the normal cone
in the normal fan $\Sigma_{\tope}= \{\sigma_{J'} \, \vert \, \emptyset \subseteq J' \subseteq I\}$ 
corresponding to the face $\theta_J \preceq \tope$
of the lattice simplex $\tope$ with $\dim(\sigma_J) = d - \dim(\theta_J) = d-k$,
\emph{i.e.}, $\sigma_J$ is the cone generated by all inward-pointing facet normals of 
facets containing the face $\theta_J \preceq \tope$.

\begin{prop} \label{v-normal} 
Let $J \subseteq I=\{0,1,\ldots,d\}$ be a $(k+1)$-element subset of $I$. Then 
the normalized volume $\Vol_{d-k}(\sigma_J \cap \tope^*) $ of
the rational polytope $\sigma_J \cap \tope^*$ is given by
\[\Vol_{d-k}(\sigma_J \cap \tope^*) =
\frac{n_{\overline{J}}}{w} \prod_{i \in \overline{J}} \frac{1}{q_i} , \]
where $\overline{J} = I \setminus J$, $q_i = \frac{w_i}{w}$ $(i \in I)$,  
and $n_{\overline{J}} = \ggT(w,w_i \, \vert \, i \in \overline{J})$.
\end{prop}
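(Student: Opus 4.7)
The plan is to realize $\sigma_J \cap \Delta^*$ as an explicit rational simplex, rescale its vertices into $N$, and then express the normalized volume as a ratio of two lattice indices that I compute by a rank-one determinant identity.

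First, I make the simplex explicit. Each facet $F_j = \conv(v_i : i\neq j)$ of the lattice simplex $\Delta = \Delta_{\w}^*$ corresponds to a vertex $y_j$ of $\Delta^*$ with $\langle v_i, y_j\rangle = -1$ for $i\neq j$, and the relation $\sum_i w_i v_i = 0$ forces $\langle v_j, y_j\rangle = w/w_j - 1$. Since $\Delta$ is a simplex, $\sigma_J$ is the simplicial cone $\R_{\geq 0}\{y_j : j\in\overline{J}\}$, and checking the inequalities $\langle v_i, y\rangle \geq -1$ on a general $y = \sum t_j y_j$ shows that $\sigma_J \cap \Delta^* = \conv(\{0\}\cup\{y_j : j\in\overline{J}\})$ is a $(d-k)$-dimensional rational simplex.

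Writing $M = N_{\w}$ as $\Z^{d+1}/\Z\w$ identifies $N$ with $\{a\in\Z^{d+1} : \sum_i w_i a_i = 0\}$; in these coordinates $y_j = (w/w_j)\, e_j - \vec{1}$, so $w_j y_j = w e_j - w_j \vec{1}$ already lies in $N$. Multilinearity of the normalized volume along the edges at the apex $0$ of a simplex then gives
\[
\Vol_{d-k}(\sigma_J\cap\Delta^*) \;=\; \frac{[L : N']}{\prod_{j\in\overline{J}} w_j}, \qquad L \defeq \mathrm{span}_{\R}(\sigma_J) \cap N,\ \ N' \defeq \Z\langle w_j y_j : j \in \overline{J}\rangle.
\]
To evaluate the index I use the projection $\pi : y \mapsto (\langle v_j, y\rangle)_{j\in\overline{J}}$, which identifies $L$ with the sublattice $\Lambda = \{b\in\Z^{\overline{J}} : w_J \mid \sum_j w_j b_j\}$ of $\Z^{\overline{J}}$, where $w_J \defeq \sum_{i\in J} w_i$. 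The key identity $\ggT(w_J, w_j : j\in\overline{J}) = \ggT(w, w_j : j\in\overline{J}) = n_{\overline{J}}$, obtained by absorbing the relation $w_J = w - \sum_{j\in\overline{J}} w_j$ into the gcd, then yields $[\Z^{\overline{J}}:\Lambda] = w_J/n_{\overline{J}}$. Under $\pi$ the vectors $w_j y_j$ become the columns of the matrix $A = w I_{d-k} - \vec{1}\,\vec{w}_{\overline{J}}^{\,T}$, so the matrix determinant lemma gives $|\det A| = w^{d-k}(1 - \tfrac{1}{w}\sum_j w_j) = w^{d-k-1} w_J$. Dividing the two indices produces $[L : N'] = n_{\overline{J}}\cdot w^{d-k-1}$, and substituting into the displayed identity above recovers the stated formula once one rewrites $w/w_j = 1/q_j$.

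The main bookkeeping challenge is juggling the three nested lattices $N' \subseteq \Lambda \subseteq \Z^{\overline{J}}$ and tracking the gcd identity correctly; once the coordinate description of $L$ via $\pi$ is in place, the rank-one structure of $A$ makes the determinant evaluation immediate, and all remaining steps are routine.
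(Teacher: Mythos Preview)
Your proof is correct and takes a genuinely different route from the paper's.

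The paper works geometrically: it shifts $\Delta^*$ by $(1,\ldots,1)$ so that the dual face $\theta_J^* + \vec 1$ becomes the base of the coordinate pyramid $\Pi_{\overline J}=\conv\bigl(0,\tfrac{1}{q_i}e_i:i\in\overline J\bigr)$ inside $\R^{\overline J}$, reads off $\Vol_{d-k}(\Pi_{\overline J})=\prod_{i\in\overline J}1/q_i$ directly, and then recovers the factor $n_{\overline J}/w$ as the reciprocal of the integral distance from $0$ to the base hyperplane $\sum_{i\in\overline J}w_iu_i=w$. This picture is very transparent, but it requires keeping track of which lattice ($L\subset N$ versus $\Z^{\overline J}$) governs each of the intermediate volumes as one passes through the shift. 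You instead stay inside $N$ throughout: you scale the rational vertices $y_j$ into $N$, reduce the volume to the single index $[L:N']$, and evaluate it by projecting via $\pi$ onto $\Z^{\overline J}$ and applying the matrix determinant lemma to the rank-one perturbation $wI-\vec 1\,\vec w_{\overline J}^{\,T}$. The gcd identity $\gcd(w_J,w_j:j\in\overline J)=\gcd(w,w_j:j\in\overline J)=n_{\overline J}$ plays exactly the role of the paper's integral-distance step. Your argument is more algebraic and arguably more robust, since every lattice comparison is funneled through the single isomorphism $\pi:L\xrightarrow{\sim}\Lambda$; the trade-off is that the geometric origin of the factor $n_{\overline J}/w$ is less visible than in the paper's pyramid picture.
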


\begin{proof}
\begin{figure}[t!]
	\centering
	\scalebox{0.6}{
	\includegraphics[width=\textwidth]{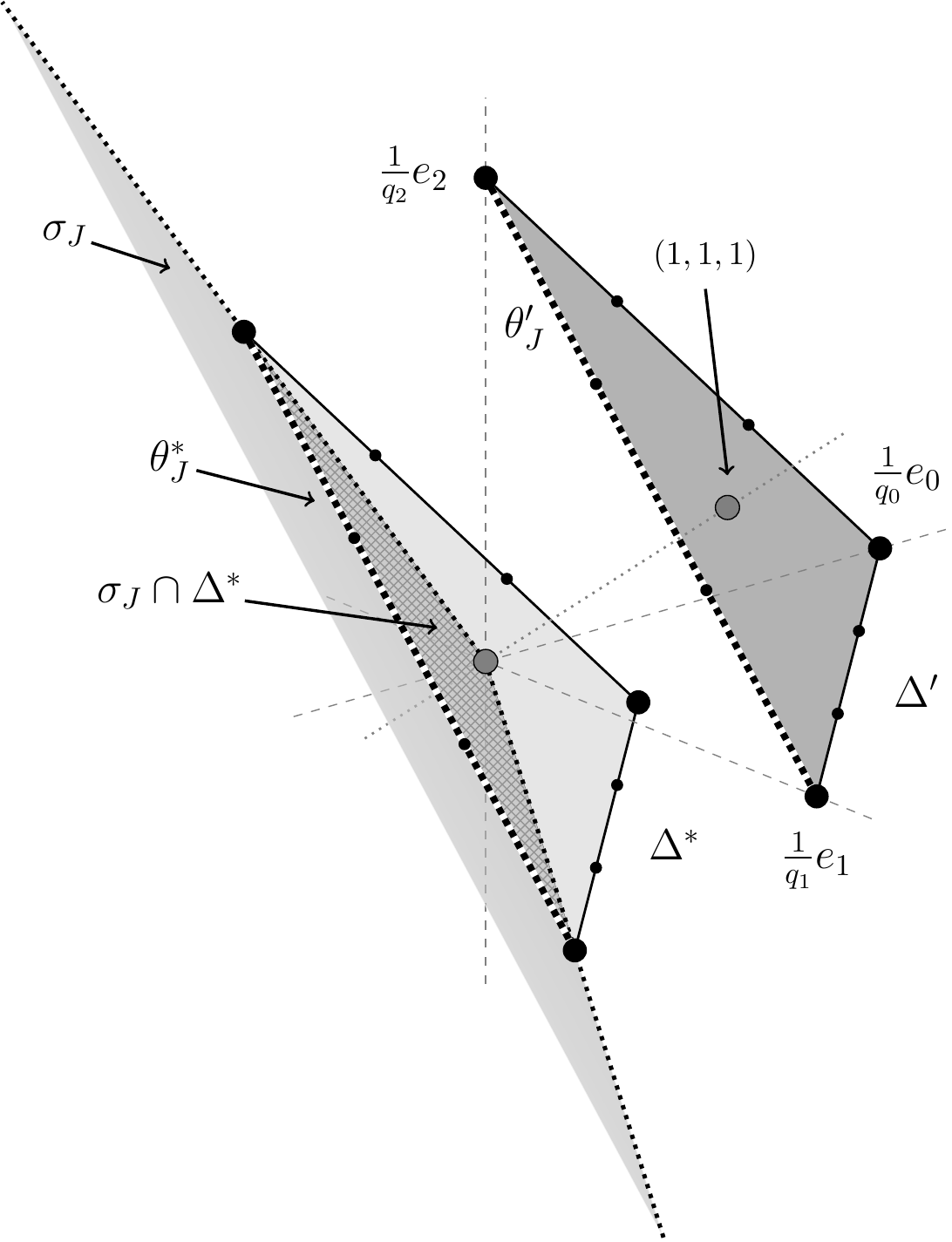}}
	\caption[]{{\bf Illustration of Proposition \ref{v-normal}.} 
	Grey coloured $2$-dimensional rational simplex $\tope'$ with unique interior lattice point $(1,1,1) \in \Z^3$ 
	and  vertices $\frac{1}{q_i}$ $(i \in I = \{0,1,2\})$ together with a $1$-dimensional dotted face 
	$\theta_J' \preceq \tope'$, \emph{i.e.}, $J =\{0\} \subseteq I$ and $d=2$.
	Light grey coloured shifted dual simplex $\tope^* = \tope' - (1,1,1)$ with unique interior lattice point 
	$(0,0,0) \in \Z^3$ together with a $1$-dimensional dotted shifted face $\theta_J^* \preceq \tope^*$. 
	The grey shaded associated $2$-dimensional normal cone $\sigma_J$ with two dotted rays corresponding 
	to the (rational) vertices of $\theta_J^*$, where $\theta_J = \{v_0\} \preceq \Delta$ is a vertex. 
	Moreover, the area $\sigma_J \cap \tope^*$ is crosshatched grey.} 
	\label{fig:VF1}
\end{figure}

Consider the $d$-dimensional rational simplex 
\[\tope' \defeq  \{ (u_0, u_1, \ldots, u_d) \in \R^{d+1} \, \vert \, u_i \geq 0 \; \; \forall i \in I, 
w_0 u_0 + w_1 u_1 + \ldots + w_d u_d = w\}.\]
Then the shifted simplex $\tope' - (1,1,\ldots, 1)$ is the dual simplex 
$\tope^* \subseteq N_\R$ of $\Delta$ because
\[ \tope^*= \{ (u_0, u_1, \ldots, u_d) \in \R^{d+1} \, \vert \, u_i \geq -1\; \;
\forall i \in I,  w_0 u_0 + w_1 u_1 + \ldots + w_d u_d = 0 \} \]
(Figure \ref{fig:VF1}).
For simplicity, we assume without loss of generality
$J = \{0,1,\ldots,k \}$, \emph{i.e.}, $\overline{J} = \{k+1, \ldots, d \}$ and
denote by $\{e_0, e_1, \ldots, e_d\}$ the standard basis of $\R^{d+1}$. 
The dual rational face $\theta_J^* \preceq \tope^* $ of $\tope^*$ as well as the 
shifted dual rational face 
$\theta_J' \defeq \theta_J^*+(1, 1,\ldots, 1)  \preceq \tope^* +(1, 1,\ldots, 1) =\tope'$ 
of the shifted simplex $\tope'$ have dimension $d-k-1$. 
Moreover, $\tope'$ has vertices $\frac{1}{q_i} e_i$ ($i \in I$) and 
$\theta_J^*+(1, 1,\ldots, 1) \preceq \tope'$ has vertices
$ \frac{1}{q_i} e_i  \;\; (i \in \overline{J})$ (Figure \ref{fig:VF1}). 

\begin{figure}
	\centering
	\scalebox{.4}{
	\includegraphics[width=\linewidth]{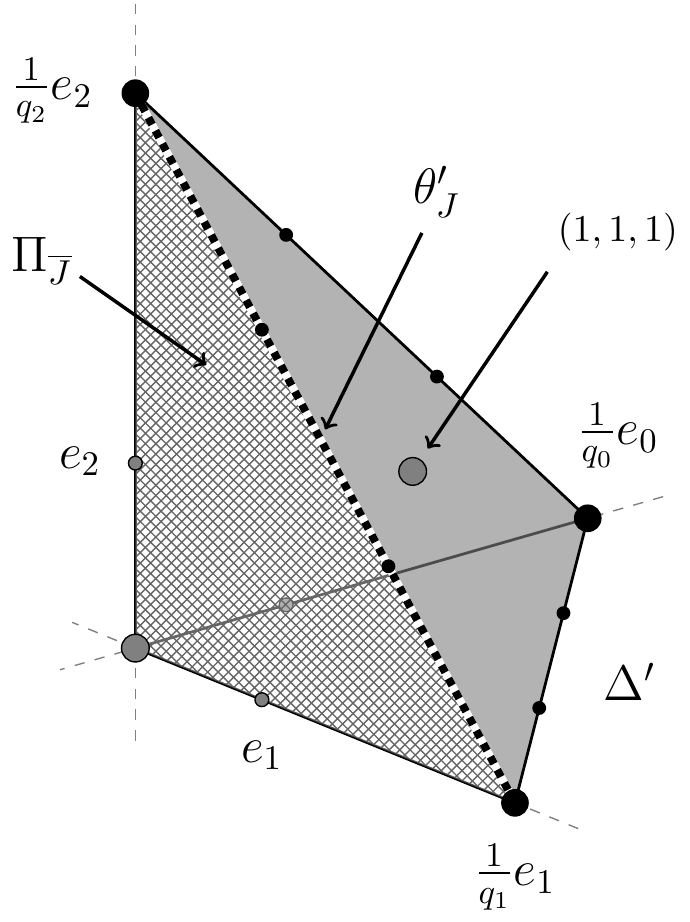}}
	\caption[]{{\bf Illustration for Proposition \ref{v-normal}.}  Shaded faces are occluded.
	Grey coloured $2$-dimensional rational simplex $\tope'$ with unique interior lattice point $(1,1,1) \in \Z^3$ 
	and vertices $\frac{1}{q_i}$ $(i \in I = \{0,1,2\})$ together with a $1$-dimensional dotted face 
	$\theta_J' \preceq \tope'$ with vertices $\frac{1}{q_i}$ $(i \in \overline{J})$, \emph{i.e.}, 
	$J =\{0\} \subseteq I$, $\overline{J} = \{1,2\}$, and $d=2$. The $2$-dimensional grey crosshatched 
	pyramid $\Pi_{\overline{J}}$ with vertex $(0,0,0) \in \Z^3$ and basis $\theta_J'$ .} 
	\label{fig:VF2}
\end{figure}

\smallskip
For $\vert J \vert = \vert I \vert = d+1$, \emph{i.e.}, $k=d$ and $\overline{J}= \emptyset$, we obtain 
$\theta_J = \tope$, $\sigma_J = \{0\}$, and  
\[\Vol_{d-k}(\sigma_J \cap \tope^*) 
=\Vol_{0}(\{0\})  =1 
=\frac{w}{w} \prod_{i \in \emptyset} \frac{1}{q_i} 
= \frac{n_{\overline{J}}}{w} \prod_{i \in \overline{J}} \frac{1}{q_i}.\]

For $\vert J \vert = d$, \emph{i.e.}, $k=d-1$ and $\overline{J}=\{w_d\}$, the associated 
dual rational face $\theta_J^* \preceq \tope^*$ has dimension $0$, 
\emph{i.e.}, $\theta_J^* \defeq \{v\}$ is a (rational) vertex and $P_v \defeq \conv(\{(0,0,\ldots,0),v\})$ 
a $1$-dimensional (rational) polytope. Therefore,
\begin{align*}
\Vol_{d-k}(\sigma_J \cap \tope^*) 
&=\Vol_{1}(P_v)  
=\frac{\ggT(w,w_d)}{w_d} 
= \frac{n_{\overline{J}}}{w} \prod_{i \in \overline{J}} \frac{1}{q_i}.
\end{align*}

Let $k\leq d-2$, \emph{i.e.}, $\vert J \vert \leq d-1$.
Moreover, let $ \Pi_{\overline{J}}$ be the pyramid with vertex $0 \in \R^{d+1}$ over
the shifted dual face $\theta_J' \preceq \tope'$, \emph{i.e.}, 
$\Pi_{\overline{J}} = \conv(\{0,  \frac{1}{q_i} e_i  \, \vert \, i \in \overline{J}\})$ with 
$\dim(\Pi_{\overline{J}})= d-k$ and
\[ \Vol_{d-k}( \Pi_{\overline{J}}) = \prod_{i  \in \overline{J}}  \frac{1}{q_i}  \]
(Figure \ref{fig:VF2}). 
The pyramid  $\Pi_{\overline{J}}$ is contained in the $(d-k)$-dimensional subspace
generated by $e_{k+1},\ldots, e_d$.  The basis of the pyramid is the simplex
$\theta_J'$ that belongs to a hyperplane in the affine linear subspace defined by the equation
\[ w_{k+1} u_{k+1} + \ldots + w_d u_d = w. \]
The integral distance between this hyperplane and the origin
equals $w/n_{\overline{J}}$, where $n_{\overline{J}} = \ggT(w, w_{k+1}, \ldots, w_d)$.
Therefore, we obtain
\[  \Vol_{d-k }(\sigma_J \cap \tope^*)= \Vol_{d-k-1}(\theta_J^*) =\Vol_{d-k-1 }(\theta_J') = 
\Vol_{d-k}( \Pi_{\overline{J}}) \cdot \frac{n_{\overline{J}}}{w} =
\frac{n_{\overline{J}}}{w} \prod_{i \in \overline{J}}  \frac{1}{q_i}. \]
\end{proof}

\begin{theo} \label{can-strEuler}
Let  $\w = (w_0, w_1, \ldots, w_d)$ be a weight vector with $\text{IP}$-property
and $Z_{\w} \subset \T_{\w}$ a non-degenerate affine
hypersurface defined by a Laurent 
polynomial $f_{\w}$ with Newton polytope $\Delta_{\w}^*$. 
Then the Zariski closure of $Z_{\w}$ 
in the $\Q$-Gorenstein toric Fano variety $\P^{\vee}({\w})$
is a $(d-1)$-dimensional Calabi-Yau variety $X_{\w}^*$ and  
\[ \chi_{\rm str}(X_{\w}^*) = \frac{1}{w} \sum_{\emptyset \subseteq J \subseteq I \atop |J| \geq 2} 
(-1)^{|J|} {n_{\overline{J}}^2} \prod_{i \in \overline{J}} \frac{1}{q_i},\]
where $\overline{J} = I \setminus J$ and $q_i=\frac{w_i}{w}$ $(i \in I)$.
\end{theo}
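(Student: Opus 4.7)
The plan is to identify $X_\w^*$ as a Calabi-Yau variety via Theorem \ref{Bat17223} and Remark \ref{ifpart}, and then to apply Theorem \ref{Bat17411} combined with Propositions \ref{volume}(iii) and \ref{v-normal} to turn the resulting combinatorial expression into the claimed formula.

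First I would verify the hypotheses of Theorem \ref{Bat17223} for the lattice simplex $\Delta = \Delta_\w^* = \conv(\{v_0,\ldots,v_d\}) \subseteq N_\w \otimes \R$. The linear relation $\sum_i w_i v_i = 0$ puts the origin in the interior of $\Delta$, so $\Delta_\w^*$ is a canonical Fano polytope. Its rational dual is $\Delta_\w$, and the convex hull of the lattice points of $\Delta_\w$ coincides with $\Delta'(W) = \Delta(W) - (1,\ldots,1)$, which is a $d$-dimensional lattice polytope containing $0 \in M_\w$ in its interior precisely by the IP-property of $\w$. Hence $[\Delta^*] = \Delta'(W)$ is also canonical Fano, so the criterion of Theorem \ref{Bat17223} holds. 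By Remark \ref{ifpart}, a canonical Calabi-Yau model of $Z_\w$ can be realized as the Zariski closure of $Z_\w$ in the toric $\Q$-Fano variety associated with the spanning fan of $[\Delta^*] = \Delta'(W)$; by construction this toric variety is exactly $\P^{\vee}(\w)$, so $X_\w^*$ is the desired $(d-1)$-dimensional Calabi-Yau variety.

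Next I would substitute the combinatorial data from Propositions \ref{volume}(iii) and \ref{v-normal} into the formula of Theorem \ref{Bat17411}. Writing $\theta_J = \conv(\{v_i : i \in J\})$ for a non-empty subset $J \subseteq I$, one has $\dim \theta_J = |J|-1$, and setting $k = |J|-1$ yields
\[ \Vol_k(\theta_J) \cdot \Vol_{d-k}(\sigma_J \cap \Delta_\w) = n_{\overline{J}} \cdot \frac{n_{\overline{J}}}{w} \prod_{i \in \overline{J}}\frac{1}{q_i} = \frac{n_{\overline{J}}^2}{w} \prod_{i \in \overline{J}}\frac{1}{q_i}, \]
while the sign factor in Theorem \ref{Bat17411} becomes $(-1)^{k-1} = (-1)^{|J|}$. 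Reindexing the sum over faces of dimension $k \in \{1,\ldots,d\}$ as a sum over subsets $J \subseteq I$ with $|J| \geq 2$ then delivers the asserted expression for $\chi_{\rm str}(X_\w^*)$. The genuine technical content has already been absorbed into Proposition \ref{v-normal}, whose pyramid-and-integral-distance computation is the only non-routine ingredient; once it is in hand, the proof of Theorem \ref{can-strEuler} is a short assembly of the pieces collected so far.
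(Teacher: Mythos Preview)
Your proposal is correct and follows essentially the same route as the paper: verify via the IP-property that $[\Delta^*]=\Delta'(W)$ is a canonical Fano polytope, invoke Theorem~\ref{Bat17223} together with Remark~\ref{ifpart} to conclude that $X_{\w}^*$ is Calabi--Yau, and then plug the volume computations of Propositions~\ref{volume}(iii) and~\ref{v-normal} into Theorem~\ref{Bat17411}, reindexing over subsets $J\subseteq I$ with $|J|\geq 2$. Your closing remark that the substantive work resides in Proposition~\ref{v-normal} is accurate.
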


\begin{proof}
By assumption, $\tope(W)^\circ \cap \Z^{d+1} =(1,1,\ldots, 1)$, \emph{i.e.}, 
the lattice polytope $[\tope^*] =[(\tope_{\w}^*)^*]=\tope(W)-(1,1,\ldots,1)=\Delta'(W)$ is 
a $d$-dimensional canonical Fano polytope.
By Theorem \ref{Bat17223} and Remark \ref{ifpart},
the Zariski closure of the non-degenerate hypersurface $Z_{\w} \subset \T_{\w}$ 
in the toric variety $\P^{\vee}(\w)$ associated with the spanning fan $\Sigma^{\vee}(\w)$ 
of the canonical Fano polytope $[\tope^*]$ is a Calabi-Yau variety $X_{\w}^*$.
Therefore, we are able to apply Theorem  \ref{Bat17411} to compute 
the stringy Euler number $\chi_{\rm str}(X_{\w}^*)$ of $X_{\w}^*$ via
\begin{align*}  
\chi_{\rm str} (X_{\w}^*)  = \sum_{k=1}^{d} (-1)^{k-1} 
\sum_{ \theta \preceq \Delta \atop \dim (\theta) = k} 
\Vol_k (\theta) \cdot \Vol_{d-k} (\sigma_\theta \cap \Delta^*).
\end{align*}
Moreover, the $k$-dimensional faces of $\tope$ are 
simplices $\theta_J = \conv(\{v_j \, \vert \, j \in J\})$ pa\-ra\-me\-tri\-zed 
by subsets $J \subseteq I$ with $\vert J \vert =k+1$, 
where $k \geq 1$ if and only if $\vert J \vert \geq 2$.  

\smallskip
The  normalized volumes $\Vol_{\vert J \vert -1 }(\theta_J)$ and 
$\Vol_{d-\vert J \vert +1}(\sigma_J \cap \tope^*)$ 
have been computed  for every subset $J \subseteq I$ 
with $\vert J \vert \geq 2$ 
in Proposition \ref{volume} (iii) and Proposition \ref{v-normal}, respectively.  
By substitution, we obtain
\begin{align*}
\chi_{\str}(X_{\w}^*) 
&= \sum_{\vert J \vert \geq 2}  (-1)^{\vert J \vert -2} \Vol_{\vert J \vert -1 }(\theta_J) 
\cdot \Vol_{d-\vert J \vert +1}(\sigma_J \cap \tope^*)\\
&=\sum_{\emptyset \subseteq J \subseteq I \atop |J| \geq 2} 
(-1)^{|J|} \, n_{\overline{J}} \cdot \frac{ n_{\overline{J}} }{w}
\prod_{i \in {\overline{J}}} \frac{1}{q_i} 
= \frac{1}{w}\sum_{\emptyset \subseteq J \subseteq I \atop |J| \geq 2} 
(-1)^{|J|} \, n_{\overline{J}}^2  \prod_{i \in {\overline{J}}} 
\frac{1}{q_i} . 
\end{align*}
\end{proof}

\section{Proof of Theorem \ref{vafemirrortest}}

Let $(w_0, w_1, \ldots, w_d) \in \Z_{>0}^{d+1}$ be a 
well-formed weight vector. We consider  the cyclic group
$G =  \langle g \rangle = \{ g^s\, | \, s \in \Z/w\Z \}  \subseteq SL(d+1, \C)$,  
whose generator $g = {\rm diag}(g_0, g_1, \ldots, g_d)$ acts linearly on $\C^{d+1}$
by the  diagonal matrix
\[{\rm diag}(e^{2\pi i q_0}, e^{2\pi i q_1}, \ldots, e^{2\pi i q_d}).\]

\smallskip
We aim at a combinatorial version of  
Vafa's Formula \eqref{vafa0}. This version has been proven 
in \cite[Formula (4.1), page 255]{GRY91}
for Calabi-Yau hypersurfaces in $4$-dimensional 
weighted projective spaces:

\begin{theo} \label{vafa-re}
Let $(w_0, w_1, \ldots, w_d) \in \Z_{>0}^{d+1}$ be as above. Then  
\[ \frac{1}{w}  \sum_{l,r=0}^{w-1}
\prod_{0 \leq i \leq d \atop lq_i,rq_i \in \Z}
\left(1 - \frac{1}{q_i} \right) 
= \frac{1}{w}  \sum_{\emptyset \subseteq
J \subseteq I \atop |J| \leq d-1} (-1)^{|J|}   \, n_J^2  \prod_{j \in J}
\frac{1}{q_j}. \]
\end{theo}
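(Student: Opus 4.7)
The strategy is to manipulate the left-hand side so that it becomes a signed subset sum over $J \subseteq I$, and then compare the resulting coefficients with those of the right-hand side. Fix $l,r \in \Z/w\Z$ and set
\[ J_{l,r} \defeq \{i \in I : l q_i \in \Z \text{ and } r q_i \in \Z\}. \]
Expanding the product as
\[ \prod_{i \in J_{l,r}} \Bigl(1 - \tfrac{1}{q_i}\Bigr) = \sum_{J \subseteq J_{l,r}} (-1)^{|J|} \prod_{j \in J} \frac{1}{q_j} \]
and switching the order of summation rewrites the left-hand side as
\[ \frac{1}{w} \sum_{J \subseteq I} (-1)^{|J|} \Bigl(\prod_{j \in J} \frac{1}{q_j}\Bigr) \cdot N_J^2, \]
where $N_J \defeq \#\{l \in \Z/w\Z : l q_j \in \Z \; \forall j \in J\}$; the square arises because the requirement $J \subseteq J_{l,r}$ splits into independent conditions on $l$ and on $r$.

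Next I would identify $N_J$ with $n_J$. The condition $l q_j \in \Z$ is equivalent to $g_j^l = 1$, hence to $g^l \in G_J$, so $N_J = |G_J| = n_J$ by Proposition \ref{volume}(ii). After this substitution, the left-hand side becomes the full, untruncated subset sum
\[ \frac{1}{w}\sum_{J \subseteq I} (-1)^{|J|} n_J^2 \prod_{j \in J} \frac{1}{q_j}. \]

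What remains is to show that the contributions with $|J| \in \{d, d+1\}$ cancel each other, so that the full sum coincides with its truncation to $|J| \leq d-1$. Well-formedness of $\w$ implies $\gcd(w_j \mid j \in J) = 1$ whenever $|J| \geq d$, so $n_J = 1$ for all such $J$. The $J=I$ term contributes $(-1)^{d+1} w^{d+1}/\prod_i w_i$. For the $|J|=d$ terms, writing $J = I \setminus \{i_0\}$ and factoring out $w^d/\prod_i w_i$ leaves $(-1)^d \sum_{i_0 \in I} w_{i_0} = (-1)^d w$, courtesy of the defining relation $w = \sum_i w_i$; the two contributions then cancel exactly.

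The main obstacle is this last cancellation: not its verification, which is brief, but recognizing it as the content of the theorem and appreciating that \emph{both} well-formedness (to force $n_J = 1$ at the two top levels) and the degree relation $\sum_i w_i = w$ are indispensable. The first two steps are a routine inclusion-exclusion and a direct group-theoretic count; the heart of the argument is the top-layer cancellation that trims the sum down to the range $|J| \leq d-1$ appearing on the right-hand side.
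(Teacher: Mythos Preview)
Your proof is correct and follows essentially the same route as the paper: expand each product as a signed sum over subsets $J$, switch the order of summation, identify the multiplicity of each $J$ with $|G_J|^2 = n_J^2$ via Proposition~\ref{volume}(ii), and then use well-formedness together with $\sum_i q_i = 1$ (equivalently $\sum_i w_i = w$) to see that the contributions from $|J| = d$ and $|J| = d+1$ cancel. The only cosmetic difference is that you carry out the top-layer cancellation in terms of the $w_i$ while the paper writes it in terms of the $q_i$.
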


\begin{proof}
Obviously, we have 
\[\frac{1}{w}  \sum_{l,r=0}^{w-1}
\prod_{0 \leq i \leq d \atop lq_i,rq_i \in \Z}
\left( 1 - \frac{1}{q_i} \right) 
= \frac{1}{|G|}  \sum_{(g^l,g^r) \in G^2}
\prod_{0 \leq i \leq d \atop g_i^l=g_i^r =1}
\left(1 - \frac{1}{q_i} \right).\] 
First consider the trivial pair $(l,r)=(0,0) \in (\Z/w\Z)^2$. Then 
$g_i^l = g^r_i =1$ for all $i \in I$ and we obtain the product
\[ \prod_{i=0}^d
\left( 1 - \frac{1}{q_i} \right) =
\sum_{\emptyset \subseteq
J \subseteq I}  \prod_{j \in J}
\left( - \frac{1}{q_j} \right) = \sum_{\emptyset \subseteq
J \subseteq I}
(-1)^{|J|} \prod_{j \in J}
\frac{1}{q_j}  \]
that appears as one summand in Vafa's Formula \eqref{vafa0}.
For $s \in \Z/w\Z$, we define the subset $J_s \subseteq I$:
\[J_s \defeq  \{ i \in I \, \vert \, g_i^s =1\}. \] 
Then for any pair $(g^l,g^r) \in G^2$, we obtain 
\[ \prod_{0 \leq i \leq d \atop g_i^l = g_i^r =1}
\left(1 - \frac{1}{q_i} \right) =
\prod_{ i \in J_l \cap J_r} \left( 1 - \frac{1}{q_i} \right) =
\sum_{\emptyset \subseteq
J \subseteq J_l \cap J_r}
(-1)^{|J|} \prod_{j \in J} \frac{1}{q_j}  \]
and
\begin{align*}
\frac{1}{|G|}  \sum_{(g^l, g^r) \in G^2} 
\prod_{0 \leq i \leq d \atop g_i^l =g_i^r =1}
\left( 1 - \frac{1}{q_i} \right)  
&= \frac{1}{|G|}  \sum_{(g^l,g^r) \in G^2} \left(
\sum_{ \emptyset \subseteq
J \subseteq J_l \cap J_r }  (-1)^{|J|} \prod_{j \in J} \frac{1}{q_j} \right) \\
& = \frac{1}{|G|} \sum_{\emptyset \subseteq
J \subseteq I} \left( 
\sum_{(g^l, g^r) \in G_J^2}   (-1)^{|J|}  \prod_{j \in J}
\frac{1}{q_j} \right)   \\
&= \frac{1}{|G|}  \sum_{\emptyset \subseteq
J \subseteq I} (-1)^{|J|}  \, n_J^2  \prod_{j \in J}
\frac{1}{q_j}  ,
\end{align*}
where the last equality holds by Proposition \ref{volume} (ii).
Moreover, we note that $n_J =1$ if $|J| \in \{ d, d+1 \}$, since we assumed
\[\ggT (w_0, \ldots, w_{i-1}, w_{i+1}, \ldots, w_d) = 1 \; \; \forall i \in I. \]
Using the equality $\sum_{i =0}^d q_i = 1$, we obtain
\[ (-1)^{d+1}  \prod_{i=0}^d \frac{1}{q_i} 
+ (-1)^d \sum_{i=0}^d \prod_{j \in I \atop j \neq i}   \frac{1}{q_j} =0 \]
and this implies
\[ \frac{1}{w}  \sum_{l,r=0}^{w-1}
\prod_{0 \leq i \leq d \atop lq_i,rq_i \in \Z}
\left(1 - \frac{1}{q_i} \right)  =  \frac{1}{w}  \sum_{\emptyset \subseteq
J \subseteq I \atop |J| \leq d-1} (-1)^{|J|}  \, n_J^2  \prod_{j \in J} \frac{1}{q_j}. \]
\end{proof}

\begin{expar}[{\cite[page 1182]{Vaf89}}] 
Let $X_{15} \subset \P(1,2,3,4,5)$ be the quasi-smooth hypersurface of degree $15$ 
defined by the weighted homogeneous polynomial 
$W= z_4^3+z_4z_1^5+z_2^5+z_2z_3^3+z_0^{15}$. 
Then $q_0= \frac{1}{15}$, $q_1= \frac{2}{15}$, $q_2= \frac{1}{5}$, $q_3= \frac{4}{15}$, 
$q_4= \frac{1}{3}$, and 
\begin{align*}
\chi_{\rm orb}(X_w)
&=  \frac{1}{15} \sum_{l,r=0}^{14} \, \, \prod_{0 \leq i \leq 4 \atop lq_i,rq_i \in \Z}
\left( 1 - \frac{1}{q_i} \right)
= \frac{1}{15}  \sum_{\emptyset \subseteq
J \subseteq I \atop |J| \leq 3} (-1)^{|J|}   \, n_J^2  \prod_{j \in J} 
\frac{1}{q_j} \\  
& = \frac{1}{15}\cdot \left(225 - \frac{585}{4} + \frac{3375}{8} - \frac{19125}{8} \right) = -126.
\end{align*}
\end{expar}

Now we prove the main theorem of our paper:

\begin{proof}[Proof of Theorem \ref{vafemirrortest}] 
By assumption, $\w = (w_0, w_1, \ldots, w_d) $ is an arbitrary 
weight vector with ${\rm IP}$-property and $Z_{\w} \subset \T_{\w}$ a 
non-degenerate affine hypersurface 
defined by a Laurent polynomial $f_{\w}$ with Newton polytope 
$\Delta_{\w}^*$. Applying Theorem \ref{can-strEuler}, the Zariski closure of $Z_{\w}$ 
 in the $\Q$-Gorenstein toric Fano variety $\P^{\vee}({\w})$
is a Calabi-Yau variety $X_{\w}^*$ and
\begin{align*}
\chi_{\str}(X_{\w}^*) = \frac{1}{w}\sum_{\emptyset \subseteq J \subseteq I \atop |J| \geq 2} 
(-1)^{|J|} \, n_{\overline{J}}^2  \prod_{i \in {\overline{J}}} 
\frac{1}{q_i} . 
\end{align*}
Since $|J| + |\overline{J}| = |I| = d+1$, we obtain 
\[ \chi_{\str}(X_{\w}^*) = 
(-1)^{d-1}    \frac{1}{w} \sum_{\emptyset \subseteq J \subseteq I \atop |\overline{J}| \leq d-1} 
(-1)^{|\overline{J}|} \, n_{\overline{J}}^2  \prod_{i \in {\overline{J}}}  \frac{1}{q_i}
=(-1)^{d-1}    \frac{1}{w}  \sum_{\emptyset \subseteq J \subseteq I \atop |J| \leq d-1} 
(-1)^{|J|}   n_J^2  \prod_{j \in J}  
\frac{1}{q_j} \]
and Theorem \ref{vafa-re} implies
\[ \chi_{\str}(X_{\w}^*)  = (-1)^{d-1}  \frac{1}{w}  
\sum_{\emptyset \subseteq J \subseteq I \atop |J| \leq d-1} 
(-1)^{|J|}   n_J^2  \prod_{j \in J}  \frac{1}{q_j}
= (-1)^{d-1}  \frac{1}{w}  \sum_{l,r=0}^{w-1}
\prod_{0 \leq i \leq d \atop lq_i,rq_i \in \Z}
\left( 1 - \frac{1}{q_i} \right).\]

\smallskip
If  $\w= (w_0, w_1, \ldots, w_d)$ is a transverse weight vector, 
Vafa's Formula \eqref{vafa0} yields
\[\chi_{\rm orb}(X_w) =  \frac{1}{w}  \sum_{l,r=0}^{w-1}
\prod_{0 \leq i \leq d \atop lq_i,rq_i \in \Z}\left( 1 - \frac{1}{q_i} \right). \]
Since $\w$ has  
${\rm IP}$-property  
\cite[Lemma 2]{Ska96}, 
a combination of the last two equations implies
\[  \chi_{\rm str}(X_{\w}^*) = (-1)^{d-1} \chi_{\rm orb}(X_w). \]
\end{proof}


\end{document}